\documentclass[11pt]{amsart}

\usepackage[T1]{fontenc}
\usepackage{lmodern}
\usepackage{microtype}
\usepackage{amsmath,amssymb,amsthm,mathtools,mathrsfs}
\usepackage{enumitem}
\usepackage[hidelinks,pagebackref]{hyperref}

\renewcommand*{\backref}[1]{}
\renewcommand*{\backrefalt}[4]{%
\ifcase #1
Not cited%
\or
(Cited on page~#2)%
\else
(Cited on pages~#2)%
\fi
}
\usepackage[nameinlink,noabbrev]{cleveref}

\allowdisplaybreaks
\setlist[enumerate]{leftmargin=*,itemsep=0.22em,topsep=0.35em}
\setlist[itemize]{leftmargin=*,itemsep=0.18em,topsep=0.3em}

\newtheorem{theorem}{Theorem}[section]
\newtheorem{proposition}[theorem]{Proposition}
\newtheorem{lemma}[theorem]{Lemma}

\newtheorem{claim}[theorem]{Claim}
\theoremstyle{definition}
\newtheorem{definition}[theorem]{Definition}
\theoremstyle{remark}

\DeclareMathOperator{\SG}{SG}
\newcommand{\N}{\mathbb N}
\newcommand{\Nzero}{\mathbb N_0}
\newcommand{\Z}{\mathbb Z}

\newcommand{\Sd}{\mathcal S_d}
\newcommand{\cL}{\mathcal L}
\newcommand{\cW}{\mathcal W}
\newcommand{\supp}{\operatorname{supp}}
\newcommand{\one}{\mathbf 1}

\newcommand{\ulim}[2]{#1\!\operatorname{-lim}_{#2}}

\newcommand{\NilBohr}[1]{\mathrm{Nil}_{#1}\text{-}\mathrm{Bohr}_0}

\title[Nil-Bohr sets and sums with bounded gaps]
{Nil-Bohr Sets and Sums with Bounded Gaps}

\author[Y. C\MakeLowercase{ao}]{Y\MakeLowercase{ang} Cao}
\address[Y. Cao]{School of Mathematics and Statistics, Nanjing University of Science and Technology, Nanjing, 210094, Jiangsu, P.R. China}
\email{cy412@mail.ustc.edu.cn}

\author[J. Z\MakeLowercase{hao}]{J\MakeLowercase{ianjie} Zhao}
\address[J. Zhao]{School of Mathematics, Hangzhou Normal University, Hangzhou, 311121, Zhejiang, P.R. China}
\email{zjianjie@hznu.edu.cn}

\subjclass[2020]{37B05, 37B20, 05D10, 54D80}
\keywords{Nil-Bohr sets, sums with gaps, Ellis group,
ultrafilter, nilsystem}

\begin{document}

\begin{abstract}
We study the relation between $\NilBohr{d}$ sets and $\SG_d^*$ sets.  
We prove that every $\NilBohr{d}$ set is an $\SG_d^*$ set, answering a question of Host and Kra. More generally,  for a
compact Hausdorff distal system whose Ellis group admits a closed filtration
\[
E=E_1\supseteq E_2\supseteq\cdots\supseteq E_{d+1}=\{e\},
\  [E_j,E]\subseteq E_{j+1},
\]
we prove that every return time set is an $\SG_d^*$ set.
\end{abstract}
\maketitle

\section{Introduction}\label{sec:intro}

\subsection{The question of Host and Kra}\label{sec:intro-main}

Classical Bohr sets are described by return times for rotations on compact
abelian groups.  Host and Kra introduced $d$-step Nil-Bohr sets as a
higher order analogue, replacing the abelian setting by $d$-step nilsystems.
On the combinatorial side, they introduced the sets $\SG_d(P)$ of sums with
gaps, obtained from finite sums whose successive selected indices differ by at
most $d$ \cite{HostKra2011}.

The classical connections between recurrence and additive combinatorics go back
to Furstenberg and Weiss \cite{Furstenberg1981,FurstenbergWeiss1978}.
The finite sums side is rooted in Hindman's theorem \cite{Hindman1974}, while
piecewise Bohr structure was developed further by Bergelson, Furstenberg and
Weiss \cite{BergelsonFurstenbergWeiss2006}.  Higher order recurrence through
nilsystems and nilsequences was developed in
\cite{BergelsonHostKra2005,BergelsonLeibman2003,HostKra2005}.  Topological
recurrence in nilsystems and polynomial Nil-Bohr recurrence were studied in
\cite{HostKraMaass2016,Tu2014}.  Nil-Bohr sets were further characterized by
Huang, Shao and Ye in terms of generalized polynomials
\cite{HuangShaoYe2016}.

The two notions considered here arise from different settings.  A
$\NilBohr{d}$ set is defined in terms of return times in a $d$-step
nilsystem, while an $\SG_d^*$ set is defined by its intersections with
bounded-gap finite sum sets.  We recall the precise definitions below.

We fix some notation. Let $\N$ denote the positive integers and let $\N_0=\N\cup \{0\}$.
Let $d\geq1$ and let $P=(p_i)_{i\geq1}$ be an infinite
sequence of positive integers.  For a finite nonempty set
$\alpha\subseteq\N$, put
\[
 p_\alpha:=\sum_{i\in\alpha}p_i.
\]
Let $\Sd$ denote the family of all
$\alpha=\{i_1<\cdots<i_m\}\subseteq\N$ that satisfy, for $1\leq j<m$,
\[
 i_{j+1}-i_j\leq d.
\]
Set
\[
 \SG_d(P):=\{p_\alpha:\alpha\in\Sd\}.
\]
In the terminology of \cite{HostKra2011}, the zero blocks between two
selected indices have length strictly less than $d$.  A set $A\subseteq\Nzero$
is an $\SG_d^*$ set if, for every infinite sequence $P$ of positive integers,
\[
 A\cap\SG_d(P)\neq\varnothing
\]

A set $A\subseteq\Nzero$ is a $\NilBohr{d}$ set if there are a $d$-step
nilsystem $(X,T)$, a point $x\in X$, and a neighbourhood $U$ of $x$ such that
\[
 N(x,U)=\{n\in\Nzero:T^n x\in U\}\subseteq A.
\]

Host and Kra asked whether every $\NilBohr{d}$ set is an $\SG_d^*$ set
\cite[Question~2.11]{HostKra2011}.  Konieczny established the inclusion
\[
\NilBohr{d}\subseteq\SG_k^*
\]
for every $k\geq 4d$ \cite[Theorem~1.1]{Konieczny2017}.  The theorem below
establishes the inclusion with $k=d$, thereby answering the question of
Host and Kra in the affirmative.

\begin{theorem}\label{thm:nilbohr}
For every $d\geq1$, every $\NilBohr{d}$ set is an $\SG_d^*$ set.
Equivalently, if $A$ contains a neighbourhood return time set from a $d$-step nilsystem, then for every sequence $P=(p_i)_{i\geq1}$ of positive
integers there exists $\alpha\in\Sd$ with $p_\alpha\in A$.
\end{theorem}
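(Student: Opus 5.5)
The proof goes through the Ellis group, following the more general statement in the abstract. A $d$-step nilsystem $(X,T)$ is distal, and its Ellis (enveloping) semigroup $E=E(X,T)$ is a group. It is a $d$-step nilpotent group in a strong topological sense: the closed subgroups $E_j$, defined by taking closures of iterated commutators $[E_{j-1},E]$, satisfy $[E_j,E]\subseteq E_{j+1}$ and $E_{d+1}=\{e\}$. We take this from the known structure of enveloping semigroups of nilsystems (Donoso, or directly: $E$ is an extension of the nilmanifold group acting by translations, and commutators of order $d+1$ act trivially). So it suffices to prove the general statement: if $(X,T)$ is distal, $E$ has such a closed filtration, $x\in X$ and $U\ni x$ is open, then for every $P$ there is $\alpha\in\Sd$ with $T^{p_\alpha}x\in U$.

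The core is an ultrafilter/idempotent argument in $E$. Given $P$, we pass to $\beta\N$-limits. For a finite word choose blocks: for each $i$ consider $T^{p_i}\in E$ and take a limit point $u=\ulim{p}{}T^{\cdot}$ along the sequence. The plan is to construct inductively $\alpha$ whose gaps are at most $d$ by looking at the elements
\[
g_n=T^{p_n},\qquad \text{and products } g_{i_1}\cdots g_{i_m}\text{ with gaps}\le d.
\]
Choose an ultrafilter $q$ on $\N$ and put $h_k=\ulim{q}{n}T^{p_{n+k}}$ for $k=0,\dots,d-1$ (shifted limits). We want a product of the form $h_{k_1}h_{k_2}\cdots$ realized along $q$ with consecutive shifts controlled, to equal $e$ modulo $E_{j}$, inductively on $j$. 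The key combinatorial identity is this: modulo $E_2$ the group is abelian, so we can choose $q$-limits making an alternating-block sum vanish. Then the commutator corrections lie in $E_2$, where they are central modulo $E_3$. We repeat with one more gap unit per level, using $d$ levels to use up gap length $d$. Concretely, we show by induction on $j$ that there is an element of $E_j$ which is a limit of $T^{p_\alpha}$ with $\alpha\in\Sd$ ranging over sets whose gaps are $\le j$ and whose minimum tends to infinity. At $j=d+1$ this element is $e$, so $T^{p_\alpha}x\to x$ and some $T^{p_\alpha}x\in U$.

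The inductive step is the main obstacle. Given families of limits $a$ (lying in $E_j$) realized by gap-$\le j$ sums in disjoint far-apart windows, we must combine them, e.g.\ via $a\,b\,a^{-1}b^{-1}$-type products, to land in $E_{j+1}$. Inverses are not available as sums, so we would instead use distality and idempotents: in a distal minimal system every idempotent of $E$ fixes points, and the minimal ideal is a group. The idea is to realize $e$-like idempotents as limits of gap sums inside cosets of $E_{j+1}$. I would first try an Hindman/Glasner style argument: take an idempotent ultrafilter on the semigroup of $\Sd$-windows, and show that its image in $E/E_{j+1}$ is an idempotent, hence trivial, because $E/E_{j+1}$ is a distal compact group image. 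The gap length grows by one per level because of the concatenation needed to kill the top layer. Verifying that concatenating windows preserves the bound ``gaps $\le d$'' exactly at $d$ levels is where the counting must be done carefully.
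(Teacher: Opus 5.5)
Your reduction to a distal system whose Ellis group has a closed filtration of length $d$ matches the paper's. One detail: the Donoso/Qiu--Zhao characterization is for minimal systems, so first pass to the orbit closure of the base point, which is a minimal $d$-step nilsystem.

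\textbf{The gap.} The core argument, which you yourself leave open, is missing, and the idempotent route you suggest cannot work as described. Sets in $\Sd$ do not concatenate across long distances: if $\min\beta-\max\alpha>d$ then $\alpha\cup\beta\notin\Sd$. So there is no partial semigroup of ``$\Sd$-windows'' with far-out right multipliers on which $\alpha\mapsto T^{p_\alpha}$ is multiplicative. Your shifted limits have the same defect: $h_{k_1}h_{k_2}=\ulim{q}{n}\ulim{q}{m}T^{p_{n+k_1}+p_{m+k_2}}$ uses index sets $\{n+k_1,m+k_2\}$ with $m-n$ unbounded.

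Worse, if such a homomorphism existed, an idempotent would map to an idempotent of the group $E$, i.e.\ to $e$, without using the filtration at all. Running this with $d=1$, every distal return-time set would be $\SG_1^*$, which is false. Take $\theta$ irrational and $\varepsilon<1/4$; then $\{n:\|n^2\theta\|<\varepsilon\}$ is a $\NilBohr{2}$ set. Choose $S_j$ with $\|S_j\theta\|\ll 1/S_{j-1}$ and $S_j^2\theta\approx 1/4 \bmod 1$. Every interval sum $S_j-S_i$ then has $(S_j-S_i)^2\theta\approx 1/2$ (or $1/4$ when $i=0$), so the set is not $\SG_1^*$.

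Your induction is also off by one as stated. An element of $E_j$ realized with gaps $\le j$ gives, at $j=d+1$, only $\SG_{d+1}^*$.

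\textbf{What is missing} is a device that lets bounded-gap sums bridge long stretches and that produces inverses.

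\emph{Bridging and inverses.} In the paper, a set in $\Sd$ bridges a stretch by containing every $d$-th index there. With $M_t=\sum_{j\le t-d,\ j\equiv t\ (\mathrm{mod}\ d)}p_j$ and $Q_t=T^{M_t}$, one has $T^{p_\alpha}=\prod_t Q_t^{\,a_{t-d}-a_t}$ where $a=\mathbf 1_\alpha$, so such a stretch contributes $Q_a^{-1}Q_b$.

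\emph{The right semigroup.} Recording the last value of $a$ in each residue class gives a path in $\{0,1\}^d$. Then $\alpha\in\Sd$ exactly when this path avoids $0_d$ strictly between its first and last changes. Loops from $e_0=(1,0,\dots,0)$ back to $e_0$ that avoid $0_d$ concatenate with arbitrary time gaps. They form the partial semigroup $\cL$, and $\Phi$ is a homomorphism on $\delta\cL$. Hence $K=\Phi(\delta\cL)$ is a compact subsemigroup of a group, hence a group, so inverses are available after all.

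\emph{Why an idempotent is not enough.} Capping a loop with $(a,+0)$ and $(b,-0)$ puts $L_0kR_0$ in $\overline{\{T^{p_\alpha}:\alpha\in\Sd\}}$ for every $k\in K$, where $L_0,R_0$ are the limits of $Q_t^{-1},Q_t$ along residue $0$. An idempotent gives only $L_0R_0$, which is trivial modulo $E_2$ but not in general. What is needed is $(R_0L_0)^{-1}\in K$.

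\emph{Where the filtration enters.} The word $(-0,+0)$ passes through $0_d$, so $C=R_0L_0$ is not directly in $K$. For nonempty $U=\{i_1<\dots<i_k\}\subseteq\{1,\dots,d-1\}$, the words $(+i_k,\dots,+i_1,-i_1,\dots,-i_k)$ and $(+i_k,\dots,+i_1,-0,+0,-i_1,\dots,-i_k)$ give the conjugates $B_UCB_U^{-1}\in K$, with $B_U=L_{i_k}\cdots L_{i_1}$. An iterated-commutator identity then shows $C$ lies in the group these generate: $C\in E_2$, and $d-1$ commutations with $L_1,\dots,L_{d-1}$ land in $E_{d+1}=\{e\}$.

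This is exactly how the filtration length is matched to the gap bound: residue $0$ handles the abelian level, and the other $d-1$ residues handle the commutator levels. None of this structure is in your sketch.
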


Host and Kra proved that every $\SG_d^*$ set is strongly piecewise $\NilBohr{d}$ \cite[Theorem~2.10]{HostKra2011}. Together with Theorem~\ref{thm:nilbohr}, this yields
\[
 \NilBohr{d}
 \ \subseteq\ \SG_d^*
 \ \subseteq\ \text{strongly piecewise }\NilBohr{d}.
\]

\subsection{The dynamical theorem}\label{sec:intro-dynamical}

The combinatorial statement is a consequence of a more general recurrence principle for distal systems. By a \emph{topological dynamical system} or just a \emph{dynamical system}, we mean a pair $(X,T)$, where $X$ is a compact Hausdorff space and $T$ is a homeomorphism. We denote by $E(X, T)$ its Ellis semigroup. 
By a classical theorem of Ellis, $(X,T)$ is distal if and only if $E(X,T)$ is a group \cite{Ellis1969}. In this case, the Ellis semigroup $E(X,T)$ is a compact Hausdorff right topological group, which we call an \emph{Ellis group}.
 
Recall that a \emph{filtration} on an Ellis group $E$ is a descending sequence $\{E_j\}_{j \in \mathbb{N}}$ of closed subgroups of $E$ such that $E = E_1 \supseteq E_2 \supseteq \cdots$ and  $[E_j, E] \subseteq E_{j+1}$ for any $j \in \mathbb{N}$. 
 We assume the filtration terminates, i.e., $E_{d+1} = \{e\}$ for some $d$.
  The least such $d$ is called the \emph{length} of the filtration. We simply call $E$ a \emph{$d$-filtered Ellis group} if it is an Ellis group equipped with a filtration of length $d$.

Suppose that $(X,T)$ is distal and that its Ellis group $E$ admits a filtration $E=E_1 \supseteq E_2 \supseteq \cdots$, then the subgroup $E_2$ is normal and $E/E_2$ is abelian as $[E, E] \subseteq E_2$.  
Since $E_2$ is closed, $E/E_2$ is a compact Hausdorff group with the quotient topology. Its right translations are continuous, and, since $E/E_2$ is abelian, its left translations are continuous as well. Thus the multiplication is separately continuous, and hence $E/E_2$ is a compact topological group by \cite[Corollary~4.3]{QiuZhao2022}; see also \cite[Appendix~B, Corollary~B.18]{de Vries}.
We now state our main theorem as follows.

\begin{theorem}\label{thm:dynamical}
Let $(X,T)$ be a compact Hausdorff distal system whose Ellis group admits a
filtration of length $d$.  Then, for every $x\in X$, every
neighbourhood $U$ of $x$, and every sequence $P=(p_i)_{i\geq1}$ of positive
integers, there exists $\alpha\in\Sd$ such that
\[
 T^{p_\alpha}x\in U.
\]
Equivalently, every return time set $N(x,U)$ is an $\SG_d^*$ set.
\end{theorem}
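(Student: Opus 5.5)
The plan is to move the problem into the Ellis group and argue by induction on the length $d$ of the filtration. Fix $x$ and $U$. Since $E=E(X,T)\subseteq X^X$ carries the topology of pointwise convergence, the evaluation map $E\to X$, $g\mapsto gx$, is continuous. It therefore suffices to show that $e$ lies in the closure of $\{T^{p_\alpha}:\alpha\in\Sd\}$ in $E$. For the induction I would prove a stronger, more flexible statement.

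\textbf{Claim.} Let $E$ be a $d$-filtered Ellis group and let $(g_i)_{i\ge1}$ be a sequence of pairwise commuting elements of $E$. Then for every neighbourhood $V$ of $e$ and every $N$ there is $\alpha\in\Sd$ with $\min\alpha>N$ and $g_\alpha:=\prod_{i\in\alpha}g_i\in V$.

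Theorem~\ref{thm:dynamical} follows by taking $g_i=T^{p_i}$. The commutativity hypothesis keeps the products independent of ordering and holds in the application. It also passes to quotients.

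\textbf{Base case $d=1$.} Here $E$ is abelian and hence, by the remark preceding the theorem, a compact topological group. Consider the partial products $s_n=g_{N+1}\cdots g_{N+n}$. By compactness some net of the $s_n$ converges, so there are $m<n$ with $s_ns_m^{-1}$ in any prescribed neighbourhood of $e$; this uses joint continuity of multiplication and of inversion. Then $s_ns_m^{-1}=g_{\{N+m+1,\dots,N+n\}}$, which is an interval, i.e.\ an element of $\mathcal S_1$.

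\textbf{Inductive step.} Pass to $E/E_d$, which inherits a filtration of length $d-1$ by the closed subgroups $E_jE_d/E_d$. One has to check that it is again a compact Hausdorff right topological group in which the argument works; that is the kind of thing the remark before the theorem handles. Apply the induction hypothesis repeatedly to get blocks $\beta_1<\beta_2<\cdots$ in $\mathcal S_{d-1}$ such that $g_{\beta_k}\in V_kE_d$ for shrinking neighbourhoods $V_k$. Write $g_{\beta_k}=v_kc_k$ with $v_k$ near $e$ and $c_k\in E_d$. The subgroup $E_d$ is central, because $[E_d,E]\subseteq E_{d+1}=\{e\}$. Hence left and right multiplication by its elements agree and are continuous, so $E_d$ is a compact abelian topological group.

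The idea is to run the base-case pigeonhole argument on the central parts $c_k$. Find $m<n$ such that $c_{m+1}\cdots c_n$ is close to $e$, and take $\alpha=\beta_{m+1}\cup\cdots\cup\beta_n$. To keep $\alpha$ in $\Sd$, I would choose the blocks consecutively, using the freedom $\min\alpha>N$, so that the gap between $\max\beta_k$ and $\min\beta_{k+1}$ is at most $d$. Since each block has internal gaps at most $d-1$, this allows one extra gap of size $d$ at each junction.

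\textbf{Main obstacle.} I expect the hardest step to be making these junctions work. The induction hypothesis produces a block starting somewhere beyond $N$, not exactly at $\max\beta_k+1,\dots,\max\beta_k+d$. Moreover, the error $v_{m+1}\cdots v_n$ must be controlled even though $E$ is only right topological, so products of many near-identity elements need not stay near $e$.

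For the junctions I would strengthen the claim so that $\min\alpha$ can be forced into a prescribed window of length $d$. Concretely, I would prove it for sequences indexed by blocks, where each new block is built by grouping the $g_i$ into consecutive packets. This is in the spirit of Host--Kra's reduction from $\SG_{d}$ to $\SG_{d-1}$.

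For the error terms I would replace the nets by idempotent ultrafilter limits $\ulim{p}{}$ in $\beta\N$. Choose the $\beta_k$ along an ultrafilter so that the $E/E_d$-components converge to $e$ and the $E_d$-components converge in the compact group $E_d$. Then only right multiplications appear in the limit, and right multiplications are continuous.
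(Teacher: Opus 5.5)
\textbf{There is a genuine gap: the inductive step is not supplied, and the fix you propose for it is false.} Your base case is fine. The technical points about $E/E_d$ can also be settled, since $E_d$ is central. But the inductive step is where the whole difficulty of the Host--Kra question lies.

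To glue blocks $\beta_k\in\mathcal S_{d-1}$ into an element of $\Sd$ you need $\min\beta_{k+1}\in[\max\beta_k+1,\max\beta_k+d]$. The induction hypothesis only gives $\min\beta_{k+1}>N$. Your strengthening, that $\min\alpha$ can be forced into a prescribed window of length $d$, fails already for a circle rotation $Tx=x+\theta$. This system has abelian Ellis group, so it is filtered of every length. Write $\|\cdot\|$ for distance to $\Z$. Choose $p_{N+1},\dots,p_{N+d}$ with $\|p_i\theta-\tfrac1{d+1}\|<\eta$, and the later $p_i$ with $\sum_{i>N+d}\|p_i\theta\|<\eta$. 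Any $\alpha$ with $\min\alpha\in[N+1,N+d]$ contains exactly $j$ of the indices $N+1,\dots,N+d$, for some $1\le j\le d$. So $T^{p_\alpha}$ is a rotation by a number within $(d+1)\eta$ of $j/(d+1)$, which stays away from $e$. The same example defeats the variant for blocks in $\mathcal S_{d-1}$.

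The error-term problem you raise is secondary. When $g_i=T^{p_i}$, all partial products of the $v_k$ lie in $\Lambda(E)$, so the $V_k$ can be chosen adaptively. Nor do idempotent ultrafilters rescue the junctions. That device needs an adequate partial semigroup on which $\beta\mapsto g_\beta$ is multiplicative. Bounded-gap sets with ``concatenate when the junction gap is at most $d$'' are not adequate: two blocks whose maxima differ by more than $d$ have no common right multiplier.

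\textbf{The missing idea is a way to join arbitrarily distant pieces by fillers whose contribution is known.} The paper does this without any induction on $d$. The telescoping identity $T^{p_\alpha}=\prod_t Q_t^{a_{t-d}-a_t}$, with $Q_t=T^{M_t}$, turns each $\alpha\in\Sd$ into a timed path in $\{0,1\}^d$ that avoids $0_d$ between its first and last events (Lemmas~\ref{lem:punctured} and~\ref{lem:path-realisation}). Between events the state is constant and nonzero, so $\alpha$ is continued $d$-periodically there. Such a continuation automatically has gaps at most $d$. Hence events can be arbitrarily far apart, and the realizations form the adequate ordered-support partial semigroup $\cL$. The image $K=\Phi(\delta\cL)$ is then a compact subgroup of $E$ containing $h_{\varepsilon_1}\cdots h_{\varepsilon_k}$ for every admissible word.

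The filtration enters only once:
\begin{itemize}
\item $C=R_0L_0\in E_2$, because $E/E_2$ is a compact abelian topological group.
\item The words $w_U,\widetilde w_U$ put the conjugates $B_UCB_U^{-1}$ in $K$.
\item Lemma~\ref{lem:cap} then gives $C\in K$.
\item Finally $e=L_0C^{-1}R_0$ is exhibited as an iterated limit of elements $T^{p_\alpha}$ with $\alpha\in\Sd$.
\end{itemize}
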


Since finite products of nilsystems of step at most $d$ again have step at
most $d$, Theorem~\ref{thm:dynamical} also gives simultaneous recurrence in
finitely many such systems.

For nilsystems and systems of order $d$, the filtration comes from the topological commutator series of
the Ellis group.  Donoso proved the relevant topological nilpotence for
systems of order $d$ \cite{Donoso2014}, and Qiu and Zhao developed the
corresponding filtration and converse structure theory \cite{QiuZhao2022}. Thus Theorem~\ref{thm:dynamical} applies not only to nilsystems but also to other distal systems
with the same filtered Ellis-group structure.

\subsection{Strategy of the proof}\label{sec:intro-proof}

The proof centers on Theorem~\ref{thm:dynamical}.  We represent bounded gap sums by paths in a finite cube and use this representation to construct a compact subgroup of the Ellis group.  
We then use the filtration of the
Ellis group to obtain the recurrence asserted in
Theorem~\ref{thm:dynamical}.  Finally, we consider the orbit closure of a point in a $d$-step nilsystem
and use Theorem~\ref{thm:dynamical} to obtain
Theorem~\ref{thm:nilbohr}.

\subsection{Structure of the paper}\label{sec:intro-structure}

Section~\ref{sec:prelim} collects the facts about Ellis groups, topological
nilpotence, and ultrafilters used in the proofs.  Section~\ref{sec:cube}
develops the cube representation of bounded gap sums.
Section~\ref{sec:compact-subgroup} constructs the required compact subgroup of
the Ellis group.  Section~\ref{sec:proof} proves the two main theorems.  

\section{Ellis groups and ultrafilter compactifications}\label{sec:prelim}

\subsection{Nilpotent Groups, nilmanifolds and nilsystems}

Let $L$ be a group. For $g, h \in L$, the \emph{commutator} of $g$ and $h$ is defined by $ [g, h] = g h g^{-1} h^{-1}$.
For subsets $A, B \subseteq L$, we denote by $[A, B]$ the subgroup generated by $\{ [a, b] : a \in A,\ b \in B \}$. The commutator subgroups $ L_j, j \geq 1$ of $L$ is defined inductively by setting $ L_1 = L,  L_{j+1} = [L_j, L]$.
Let  $d \geq 1$ be an integer, we say that $L$ is \emph{$d$-step nilpotent} if $L_{d+1}$ is the trivial subgroup.

Let $L$ be a $d$-step nilpotent Lie group and let $\Gamma$ be a discrete cocompact subgroup of $L$. The compact manifold $X = L / \Gamma$ is called a \emph{$d$-step nilmanifold}. The group $L$ acts on $X$  by left translations and we write this action as $(g, x) \mapsto g x$. Fix $\tau \in L$ and let $T \colon X \to X$ be the transformation defined by $T(x) = \tau x$. Then $(X, T)$ is called a \emph{$d$-step nilsystem}.

A $d$-step nilsystem is a distal dynamical system. For such a system, the following properties are equivalent: transitivity and minimality. Moreover, the closure of any orbit in a $d$-step nilsystem, equipped with the restricted transformation, is itself a $d$-step nilsystem; hence it is minimal. See~\cite{AGH-1963} for proofs and further references.

A \emph{$d$-step pro-nilsystem} is defined as an inverse limit of rotations on $d$-step nilsystems. That is, $(Y, S)$ is a $d$-step pro-nilsystem if there exists an inverse system $\{(X_\alpha, T_\alpha)\}_{\alpha \in \mathcal{A}}$ of $d$-step nilsystems such that $(Y, S) \cong \varprojlim_{\alpha} (X_\alpha, T_\alpha)$.

\subsection{Ellis groups and topological nilpotence}\label{sec:ellis}

Let $(X,T)$ be a topological dynamical system.
Taking the closure in the topology of pointwise convergence, its Ellis semigroup is
\[
 E(X,T)=\overline{\{T^n:n\in\Z\}}\subseteq X^X.
\]
We use composition as multiplication, so $(pq)x=p(qx)$.  Every right translation
$p\mapsto pq$ is continuous.  The topological centre $\Lambda(E)$ consists
of those $p\in E$ for which left translation $q\mapsto pq$ is continuous.
For every $n\in\Z$, we have $T^n\in\Lambda(E)$.


Let $E$ be an Ellis group. For $A, B \subset E$, define $ [A, B]_{\mathrm{top}}$ as the closure of the subgroup $[A, B]$ spanned by $\{[a, b] : a \in A,\ b \in B\}$.
The topological commutator subgroups $E_j^{\mathrm{top}}$, $j \geq 1$, are defined by setting
$E_1^{\mathrm{top}} = E$ and $E_{j+1}^{\mathrm{top}} = [E_j^{\mathrm{top}}, E]_{\mathrm{top}}$.
It is easy to see that $E_j^{\mathrm{top}} \supset E_{j+1}^{\mathrm{top}}$ for all $j \geq 1$.
Let $d \in \mathbb{N}$. We say that $E$ is \emph{$d$-step top-nilpotent} if $E_{d+1}^{\mathrm{top}}$ is the trivial subgroup.

 The following result ensures that these recursively defined sets are indeed subgroups.
\begin{lemma}\cite{QiuZhao2022} \label{E_d^top subgroup}
  Let $E$ be an Ellis group. Then $E_d^{\mathrm{top}}$ is a subgroup of $E$ for all $d \in \mathbb{N}$.
\end{lemma}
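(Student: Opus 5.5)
We argue by induction on $d$, deducing the statement from a general fact about Ellis groups: \emph{if $H$ is a subgroup of an Ellis group $E$, then its closure $\overline H$ is again a subgroup.} The case $d=1$ is trivial, since $E_1^{\mathrm{top}}=E$. Suppose $E_d^{\mathrm{top}}$ is a subgroup. By definition $[E_d^{\mathrm{top}},E]$ is the subgroup generated by the commutators $[a,b]$, and $E_{d+1}^{\mathrm{top}}$ is its closure. The general fact then gives the inductive step at once. (It even makes the inductive hypothesis unnecessary, but the induction keeps the bookkeeping of the chain $E_j^{\mathrm{top}}\supseteq E_{j+1}^{\mathrm{top}}$ clean.)

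To prove the general fact I would proceed in two stages. First, a closed subsemigroup $K$ of a compact Hausdorff right topological group is a subgroup. Indeed, $K$ is itself a compact Hausdorff right topological semigroup, so by the Ellis--Numakura lemma it contains an idempotent. In a group the only idempotent is $e$, so $e\in K$. For $p\in K$, the set $\overline{\{p^n:n\ge1\}}\subseteq K$ is a compact right topological semigroup. Its minimal ideal contains an idempotent, necessarily $e$, and from this one extracts $p^{-1}\in\overline{pK}$ and then $p^{-1}\in K$. Second, one must show that $K=\overline H$ is closed under multiplication. Given $p,q\in K$, choose a net $h_\alpha\to p$ with $h_\alpha\in H$. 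Continuity of right translation gives $pq=\lim_\alpha h_\alpha q$. Hence it suffices to prove $hK\subseteq K$ for every $h\in H$.

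The main obstacle is exactly this inclusion $hK\subseteq K$. Left translation by $h$ is continuous only when $h$ lies in the topological centre $\Lambda(E)$, and a general element of $H$ (for instance a commutator of non-central elements) need not lie there. My first attempt would use the concrete realization $E\subseteq X^X$ together with distality. Fix $q\in K$ and a net $q_\beta\to q$ with $q_\beta\in H$, so $q_\beta x\to qx$ for every $x\in X$. I would then try to show that $hq$ is a pointwise limit of elements $hq'_\beta\in H$, after re-choosing the approximating net with the help of the distal structure. Concretely, one can pass to the product system $(X^X,T)$ and use that in a distal system every orbit closure is minimal. Then the closure of $\{(q_\beta x,\, hq_\beta x)\}$ would be an orbit closure under a group action, which could force $(qx,hqx)$ into it.

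If this direct approach stalls, I would instead invoke the structure theory of compact Hausdorff right topological groups developed in \cite{QiuZhao2022} (Namioka-type joint continuity results / \cite[Appendix~B]{de Vries}). These show that the relevant closures behave like closures in a topological group modulo a small normal subgroup, and from that one can read off $hK\subseteq K$.
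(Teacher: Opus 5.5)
There is a genuine gap. Your proof rests entirely on the claim that the closure of an \emph{arbitrary} subgroup of an Ellis group is a subgroup, and that claim is false. So the inclusion $hK\subseteq K$, which you correctly single out as the obstacle, cannot be proved in that generality, whether by distality, Namioka-type results, or anything else.

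A counterexample already occurs for the $2$-step nilsystem $T(x,y)=(x+\alpha,y+x)$ on $\mathbb{T}^2$, with $\alpha$ irrational. By Weyl equidistribution, its Ellis group consists of the maps $p_{\phi,v}(x,y)=(x+\phi(\alpha),\,y+\phi(x)+v)$, where $\phi$ is an arbitrary (not necessarily continuous) homomorphism $\mathbb{T}\to\mathbb{T}$ and $v\in\mathbb{T}$. The topology is pointwise convergence in $\phi$ times the usual topology in $v$, and $p_{\phi,v}\,p_{\psi,w}=p_{\phi+\psi,\,v+w+\phi(\psi(\alpha))}$. Choose $\beta$ with $1,\alpha,\beta$ rationally independent, a homomorphism $\phi_0$ with $\phi_0(\alpha)=\beta$ and $\phi_0(\beta)=0$, and a homomorphism $\chi$ with $\chi(\beta)=\alpha$. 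Then:
\begin{itemize}
\item $H=\{p_{n\phi_0,0}:n\in\Z\}$ is a cyclic subgroup with $\overline H=\{p_{\phi,0}:\phi\in\overline{\Z\phi_0}\}$;
\item $q=p_{\chi\circ\phi_0,0}\in\overline H$ by Kronecker's theorem;
\item but $p_{\phi_0,0}\,q=p_{\phi_0+\chi\circ\phi_0,\,\beta}\notin\overline H$, since $\beta\neq 0$.
\end{itemize}
The same example shows that $\overline{\{p^n:n\geq1\}}$ need not be a semigroup, so your Stage~1 sketch has the same flaw. Its conclusion is nevertheless true; it is exactly Lemma~\ref{lem:compact-cancellative}.

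What is missing is the induction hypothesis, which you called unnecessary; it is in fact the whole point. Put $N=[E_d^{\mathrm{top}},E]$ and $K=\overline N=E_{d+1}^{\mathrm{top}}$. A trivial induction gives $K\subseteq E_d^{\mathrm{top}}$. Suppose $E_d^{\mathrm{top}}$ is a group and $q\in K$. Then $q^{-1}[a,b]q=[q^{-1}aq,\,q^{-1}bq]$ with $q^{-1}aq\in E_d^{\mathrm{top}}$, so conjugation by $q$ maps generators of $N$ to generators, and $q^{-1}Nq\subseteq N$. Hence for $h\in N$, writing $n=q^{-1}hq\in N$,
\[
hq=qn\in\overline N\,n\subseteq\overline{Nn}=\overline N=K,
\]
which uses only continuity of right translation by $n$. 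This is the inclusion $hK\subseteq K$. Your net argument then gives $KK\subseteq K$, and Lemma~\ref{lem:compact-cancellative} shows that the closed subsemigroup $K$ is a subgroup. So what makes the closure a group is that $[E_d^{\mathrm{top}},E]$ is normalized by the group $E_d^{\mathrm{top}}\supseteq K$, not any property shared by all subgroups.
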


Topological nilpotence is equivalent to the pro-nilsystem property; this equivalence is made precise by the following characterization.
 
\begin{lemma}\label{pro-nilsystem-top-nilpotent group} \cite{Donoso2014, QiuZhao2022}
Let $(X, T)$ be a minimal system, then it is a $d$-step pro-nilsystem if and only if its enveloping semigroup is a $d$-step top-nilpotent group.
\end{lemma}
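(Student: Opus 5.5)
The plan is to go through the dynamical characterization of pro-nilsystems by the regionally proximal relation of order $d$. By the structure theorem of Host--Kra--Maass (extended to arbitrary minimal systems by Shao--Ye), a minimal system $(X,T)$ is a $d$-step pro-nilsystem exactly when $\mathbf{RP}^{[d]}(X)$ is the diagonal. So both directions reduce to comparing $E_{d+1}^{\mathrm{top}}$ with $\mathbf{RP}^{[d]}$. I would organize everything around the following assertion, proved by induction on $j\geq1$:
\[
(\ast)_j\qquad p\in E_{j+1}^{\mathrm{top}}\ \Longrightarrow\ (x,px)\in \mathbf{RP}^{[j]}(X)\ \text{ for all }x\in X .
\]
Here I use the convention $\mathbf{RP}^{[0]}=X\times X$, so the case $j=0$ is trivial. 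I will also prove a partial converse to $(\ast)_d$.

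\emph{Forward direction.} A pro-nilsystem is distal, so by Ellis $E=E(X,T)$ is a group. I first prove $(\ast)_j$ using the dynamical cubes $\mathbf{Q}^{[j]}(X)$ and their face transformations. Take $p\in E_j^{\mathrm{top}}$ and $q\in E$. By induction, $(x,px)\in\mathbf{RP}^{[j-1]}$. Using the characterization of $\mathbf{RP}^{[j-1]}$ through $\mathbf{Q}^{[j]}$, I place $x,px,qx,qpx$ (suitably translated) on a $j$-cube. Applying $p^{-1}$ and $q^{-1}$, which are pointwise limits of powers of $T$ and therefore preserve the closed invariant set $\mathbf{Q}^{[j]}$ coordinatewise, yields a cube in $\mathbf{Q}^{[j+1]}$ with all vertices equal to $x$ except one, which equals $[p,q]x$. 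This is exactly the criterion for $(x,[p,q]x)\in\mathbf{RP}^{[j]}$.

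Next I extend from commutators to the group they generate and to its closure. For products this uses that $\mathbf{RP}^{[j]}$ is an equivalence relation, which holds for minimal systems. Closure is preserved because $\mathbf{RP}^{[j]}$ is closed and $p\mapsto px$ is continuous in the pointwise topology. This completes $(\ast)_j$. Taking $j=d$ and using $\mathbf{RP}^{[d]}=\Delta_X$ gives $px=x$ for all $x$, hence $E_{d+1}^{\mathrm{top}}=\{e\}$.

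\emph{Converse.} Suppose $E$ is a group with $E_{d+1}^{\mathrm{top}}=\{e\}$. Then $X$ is distal, and I must show that $(x,y)\in\mathbf{RP}^{[d]}$ forces $x=y$. The key step, and the one I expect to be the main obstacle, is the reverse of $(\ast)_d$: for a minimal distal system, whenever $(x,y)\in\mathbf{RP}^{[d]}$, we have $y=px$ for some $p\in E_{d+1}^{\mathrm{top}}$.

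My first attempt will be to work with the Ellis group $\mathcal E^{[d]}$ of the cube system $(\mathbf{Q}^{[d+1]},\mathcal F^{[d+1]})$ generated by the face transformations. In the distal case, $\mathbf{Q}^{[d+1]}_x$ is the orbit closure of $x^{[d+1]}$ under the face group and is minimal (Shao--Ye), so the cube $(x,\dots,x,y)$ is $u\,x^{[d+1]}$ for some $u$ in that Ellis group. I would then project $u$ onto the coordinates and express the last coordinate, after cancelling the others against $x$, as an iterated commutator of length $d+1$ of elements of $E$. This requires approximating $u$ by words in the face transformations $T^{[d+1]}_i$ and passing to limits. Continuity is available only on the right, so the commutator has to be assembled with the limit factors on the correct side, and here the closedness built into $E_j^{\mathrm{top}}$ (Lemma~\ref{E_d^top subgroup}) is essential. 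Granting that step, $y=px=x$ and $\mathbf{RP}^{[d]}$ is trivial, so $X$ is a $d$-step pro-nilsystem.
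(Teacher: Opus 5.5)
\textbf{The converse is not proved.} The paper does not prove this lemma: it cites the forward implication from Donoso and the converse from Qiu--Zhao. The step you grant is that $(x,y)\in\mathbf{RP}^{[d]}$ forces $y\in E_{d+1}^{\mathrm{top}}x$. In the only case where you use it, $E_{d+1}^{\mathrm{top}}=\{e\}$, this is literally the assertion $\mathbf{RP}^{[d]}=\Delta_X$, i.e.\ the conclusion itself. So passing to $\mathbf{RP}^{[d]}$ reduces nothing, and the whole converse sits in the step you leave open. The mechanism you sketch does not supply it, for two reasons.
\begin{enumerate}
\item A word in the face transformations has coordinates $T^{\vec n\cdot\omega}$ with $\vec n\in\Z^{d+1}$, and these all commute. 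Non-commutativity enters only through the limit $u$, which is a limit along an arbitrary net, not an iterated limit in a controlled order. Since multiplication in $E$ is only right continuous, you have no means of transferring the abelian relations among the $T^{\vec n\cdot\omega}$ to relations among the $u_\omega$ modulo $E_{d+1}^{\mathrm{top}}$.
\item For $\omega\neq\vec 1$ you only know $u_\omega x=x$. So the $u_\omega$ lie in the stabilizer of $x$, which is not normal, and ``cancelling them against $x$'' is not an operation in $E$.
\end{enumerate}
Your idea does work for $d=1$. There $E/E_2^{\mathrm{top}}$ is a compact topological group, so the image of $u_{11}$ equals that of $u_{10}u_{01}$. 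Normality of $E_2^{\mathrm{top}}$ then gives $y=u_{11}x\in E_2^{\mathrm{top}}u_{10}u_{01}x=E_2^{\mathrm{top}}x$. For $d\geq2$ the quotient $E/E_{d+1}^{\mathrm{top}}$ is in general only right topological. Overcoming exactly this lack of joint continuity is what the cited converse has to do.

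\textbf{The forward induction step is incomplete for $j\geq2$.} Your passage from commutators to $E_{j+1}^{\mathrm{top}}$ is fine, but the step as described only works for $j=1$. Take $\mathbf a=(x,\dots,x,px)\in\mathbf{Q}^{[j]}$ and the cube $(\mathbf a,q\mathbf a)\in\mathbf{Q}^{[j+1]}$. The entry $p$ sits on the vertices $(\vec 1,0)$ and $(\vec 1,1)$. For $j=1$ these form a face, and a limit of face transformations strips $p$ off. For $j\geq2$ they form an edge. The pointwise statement $(\ast)_{j-1}$ only says that cubes of the special form $(y,\dots,y,py)$ lie in $\mathbf{Q}^{[j]}$. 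It gives you no operation acting by $p^{-1}$ on that edge and trivially elsewhere, applicable to the cube $(\mathbf a,q\mathbf a)$.

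\textbf{Repair.} Induct on a cube-level statement: for $p\in E_j^{\mathrm{top}}$, every $k\geq j$ and every face $F\subseteq\{0,1\}^k$ of codimension $j$, the map $\Psi^F_p$ that applies $p$ to the coordinates indexed by $F$ and fixes the others maps $\mathbf{Q}^{[k]}$ into itself.
\begin{itemize}
\item For $j=1$ this follows from invariance of $\mathbf{Q}^{[k]}$ under the face transformations and the diagonal transformation, by passing to pointwise limits.
\item If $F=F'\cap G$ with $\operatorname{codim}F'=j$ and $\operatorname{codim}G=1$, then $\Psi^F_{[p,q]}=[\Psi^{F'}_p,\Psi^G_q]$.
\item The set of $r\in E$ with $\Psi^F_r(\mathbf{Q}^{[k]})\subseteq\mathbf{Q}^{[k]}$ is a closed subsemigroup of $E$. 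By Lemma~\ref{lem:compact-cancellative} it is a subgroup, so it contains $E_{j+1}^{\mathrm{top}}$.
\end{itemize}
Taking $F=\{\vec 1\}\subseteq\{0,1\}^{d+1}$ gives $(x,\dots,x,px)\in\mathbf{Q}^{[d+1]}$, hence $(x,px)\in\mathbf{RP}^{[d]}$, for all $p\in E_{d+1}^{\mathrm{top}}$. This also removes the need for transitivity of the intermediate relations $\mathbf{RP}^{[j]}$.
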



The following lemma identifies when a compact subsemigroup of the Ellis group is a group.

\begin{lemma}\label{lem:compact-cancellative}
Let $K$ be a compact Hausdorff right-topological semigroup, Suppose that there exists a group $G$ such that $K\subseteq G$ 
and $xy=x\cdot_K y, x,y\in K$,
where $\cdot_K$ denotes the semigroup operation on $K$.
Then $K$ is a subgroup of $G$.  In particular, $K$ is a group.
\end{lemma}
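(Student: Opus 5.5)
The plan is to use the classical fact that a compact right-topological semigroup contains an idempotent (the Ellis–Numakura lemma), and then to use cancellation in the ambient group $G$. First, $K$ is a nonempty compact Hausdorff right-topological semigroup, since for each $y$ the map $x\mapsto x\cdot_K y$ is continuous. A Zorn's lemma argument therefore gives a minimal nonempty closed subsemigroup $M\subseteq K$. For $u\in M$, the set $Mu$ is compact, because it is the image of $M$ under the continuous map $x\mapsto xu$. It is also a subsemigroup contained in $M$, so minimality forces $Mu=M$. Consequently $\{x\in M: xu=u\}$ is a nonempty closed subsemigroup, and minimality forces it to equal $M$; in particular $uu=u$. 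Thus $K$ contains an idempotent $u$.

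Next I would identify $u$ with the identity of $G$. Since the operations agree, $uu=u$ holds in $G$. Multiplying by $u^{-1}$ in $G$ gives $u=e_G$, so $e_G\in K$.

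The remaining, and main, step is closure under inverses. Fix $x\in K$. The set $Kx$ is compact because right translation is continuous, and it is a subsemigroup of $K$. I would apply the idempotent argument above to the compact right-topological semigroup $Kx$ (nonempty, closed in $K$). This yields an idempotent of $Kx$, which by the previous step must be $e_G$. Hence $e_G=yx$ for some $y\in K$. In $G$ this means $y=x^{-1}$, so $x^{-1}\in K$.

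Therefore $K$ contains $e_G$, is closed under the product of $G$, and is closed under inverses, so it is a subgroup of $G$. Since the group structure induced from $G$ is exactly $\cdot_K$, $K$ is itself a group. The only genuinely nontrivial ingredient is the existence of idempotents, and I would either cite the Ellis–Numakura lemma or include the Zorn argument sketched above. The remaining care is to check that $Kx$ is a closed subsemigroup, which follows from the continuity of right translation together with the associativity inherited from $G$.
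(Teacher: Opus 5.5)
Your proof is correct, but it rests on a different key lemma than the paper's. You use the Ellis--Numakura idempotent lemma, proved via a minimal closed subsemigroup. You apply it to $K$ to get $e_G\in K$, and then to the closed subsemigroup $Kx$ to get $e_G\in Kx$. In both cases cancellation in $G$ forces any idempotent to equal $e_G$.

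The paper instead works with a minimal closed left ideal $I\subseteq K$. For $a\in I$, the set $Ia$ is a closed left ideal contained in $I$, so $Ia=I$, and some $u\in I$ satisfies $ua=a$. Cancellation then gives $u=e$ directly, without ever needing $uu=u$. Hence $e\in I$, so $I=K$, and the identity $Ka=K$ for every $a\in K$ supplies the left inverse.

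So the paper gets both $e\in K$ and closure under inverses from a single minimality argument, using cancellation in place of the idempotent lemma. Your route has the advantage of reducing everything to a standard, citable fact (existence of idempotents in compact right-topological semigroups), applied in the same way each time.

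Your first step is redundant. The idempotent you find in $Kx$ already lies in $K$ and equals $e_G$, so the $Kx$ step alone gives both $e_G\in K$ and $x^{-1}\in K$.
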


\begin{proof}
Choose a minimal nonempty closed left ideal $I\subseteq K$.  Fix $a\in I$.
Right multiplication by $a$ is continuous, so $\varnothing\neq Ia\subseteq I$ and $Ia$ is closed.
Since $KIa\subset Ia$, and $I$ is minimal, one has $Ia=I$.
Hence there is $u\in I$ such that $ ua=a$.
Cancellation implies $u=e$.  Thus $e\in I$ and $K=Ke\subseteq I\subseteq K$,
and therefore $I=K$.

Now fix $a\in K=I$.  Then $Ka=Ia=I=K$. Thus there exists some $b\in K$ satisfies $ ba=e$.
Cancellation gives $b=a^{-1}\in K$, so $K$ is a group.
\end{proof}

\subsection{Ultrafilter and Uniform limits}\label{sec:ultrafilters}

We recall some standard facts concerning the ultrafilters of a discrete space; see, e.g., \cite[Chapters~3--4]{HindmanStrauss2012}.

Let $S$ be a discrete space.  We identify $\beta S$ with the set of all ultrafilters on $S$, and identify each $s\in S$ with the principal ultrafilter
\[
p_s:=\{A\subseteq S:s\in A\}.
\]
Thus, $S\subseteq\beta S$.  For $A\subseteq S$, set
\[
\overline{A}^{\,\beta S}:=\{p\in\beta S:A\in p\}.
\]
The family $\bigl\{\overline{A}^{\,\beta S}:A\subseteq S\bigr\}$
is a clopen base for the topology of $\beta S$.

Let $X$ be a topological space, let $p\in\beta S$, and let
$(x_s)_{s\in S}$ be a family of points of $X$.  For $x\in X$, $p\text{-}\lim_{s\in S}x_s=x$ 
if and only if for every neighborhood $U$ of $x$, $\{s\in S:x_s\in U\}\in p$.

 The following theorems summarize fundamental properties of  uniform limits via ultrafilters that will be used throughout the sequel without further reference.

\begin{theorem}\label{thm:ultralimit-basic} \cite{HindmanStrauss2012}
Let $S$ be a discrete space, let $p \in \beta S$, and let $\{x_s\}_{s \in S}$ be an indexed family in a topological space $X$.
\begin{enumerate}
  \item If $ p\text{-}\lim_{s\in S}x_s$ exists, then it is unique.
  \item If $X$ is compact, then $p\text{-}\lim_{s\in S}x_s $ exists.
\end{enumerate}
\end{theorem}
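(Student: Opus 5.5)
The statement to prove is Theorem~\ref{thm:ultralimit-basic}: uniqueness of $p$-limits (which requires $X$ Hausdorff, the standing assumption throughout the paper) and existence in compact spaces. The plan is to argue directly from the definition of $p$-$\lim$ and from the filter axioms of an ultrafilter.

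For (1), suppose $x\neq y$ are both $p$-limits of $(x_s)_{s\in S}$. Since $X$ is Hausdorff, choose disjoint open neighbourhoods $U\ni x$ and $V\ni y$. By the definition of the limit, both $A=\{s:x_s\in U\}$ and $B=\{s:x_s\in V\}$ belong to $p$. Because $p$ is a filter, $A\cap B\in p$. But $A\cap B=\varnothing$, since $U\cap V=\varnothing$. This contradicts $\varnothing\notin p$, so the limit is unique.

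For (2), argue by contradiction. Suppose no point of $X$ is a $p$-limit. Then each $x\in X$ has an open neighbourhood $U_x$ with $A_x:=\{s:x_s\in U_x\}\notin p$. Since $p$ is an ultrafilter, the complement $S\setminus A_x$ lies in $p$.
\begin{itemize}
\item By compactness, finitely many sets $U_{x_1},\dots,U_{x_n}$ cover $X$.
\item Closure of $p$ under finite intersections gives $\bigcap_{k}(S\setminus A_{x_k})\in p$.
\item This intersection is $S\setminus\bigcup_k A_{x_k}$. Since every $x_s$ lies in some $U_{x_k}$, we have $\bigcup_k A_{x_k}=S$, so the intersection is empty.
\end{itemize}
Again $\varnothing\in p$, a contradiction. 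So some $p$-limit exists.

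Neither step is a real obstacle. The only point needing care is the ultrafilter dichotomy ($A\notin p$ implies $S\setminus A\in p$) in part (2). One should also record explicitly that part (1) uses the Hausdorff hypothesis, which holds for all spaces in the paper.
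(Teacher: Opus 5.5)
Your proof is correct. It is the standard argument from Hindman--Strauss: Hausdorff separation for uniqueness, and a finite subcover combined with the ultrafilter dichotomy for existence. The paper gives no proof of its own and only cites that book. You are also right that part (1) genuinely needs $X$ to be Hausdorff: the statement as printed omits this, relying on Hindman--Strauss's convention that all spaces are Hausdorff, and every space to which the paper applies it ($X$, $E$, $E/E_2$) is compact Hausdorff.
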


\begin{theorem}\label{thm:ultralimit-continuous}\cite{HindmanStrauss2012}
Let $S$ be a discrete space, let $p \in \beta S$, let $X$ and $Y$ be topological spaces, let $\{x_s\}_{s \in S}$ be an indexed family in $X$, and let $f \colon X \to Y$. If $f$ is continuous and $p\text{-}\lim_{s\in S}x_s $ exists, then
\[
   p\text{-}\lim_{s\in S} f(x_s) = f\bigl( p\text{-}\lim_{s\in S}x_s  \bigr).
\]
\end{theorem}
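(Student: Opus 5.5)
The plan is to check the definition of the $p$-limit directly, using only that preimages of open sets under a continuous map are open. Put $x:=p\text{-}\lim_{s\in S}x_s$. To show that $f(x)$ is the $p$-limit of $(f(x_s))_{s\in S}$, it suffices to show that for every neighbourhood $V$ of $f(x)$ in $Y$,
\[
 \{s\in S: f(x_s)\in V\}\in p .
\]

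First, fix a neighbourhood $V$ of $f(x)$; shrinking it, I may assume $V$ is open. Continuity of $f$ gives that $W:=f^{-1}(V)$ is an open subset of $X$, and $x\in W$ because $f(x)\in V$, so $W$ is a neighbourhood of $x$. Second, the hypothesis $p\text{-}\lim_{s\in S}x_s=x$ gives $\{s\in S:x_s\in W\}\in p$. Third, this set equals $\{s\in S: f(x_s)\in V\}$, since $x_s\in f^{-1}(V)$ exactly when $f(x_s)\in V$. If the original $V$ was not open, the required set contains this one, and ultrafilters are closed under supersets, so it is also in $p$.

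This proves that $f(x)$ is a $p$-limit of $(f(x_s))$. To write it as \emph{the} $p$-limit with an equality sign, I invoke part (1) of Theorem~\ref{thm:ultralimit-basic}. That uniqueness statement implicitly requires $Y$ to be Hausdorff, which holds in every application in the paper. I would state this assumption explicitly, or read the displayed equality as saying that $f(x)$ is a $p$-limit.

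There is no real obstacle here. The only delicate point is this Hausdorff/uniqueness caveat for the equality sign.
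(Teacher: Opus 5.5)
Your argument is correct and is the standard proof from Hindman--Strauss, which the paper cites rather than reproving: pull back an open neighbourhood of $f(x)$ along $f$, apply the defining property of the $p$-limit, and use closure of $p$ under supersets. Your Hausdorff caveat is apt, but the paper defines $p\text{-}\lim_{s\in S}x_s=x$ as a relation (every neighbourhood of $x$ has its index set in $p$), so what you verified is already the displayed equality, and uniqueness in $Y$ is needed only to read it as an identity between well-defined points.
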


\begin{theorem}\cite{HindmanStrauss2012}
Let $S$ be a discrete space, let $X$ be a compact topological  space, let $f \colon S \to X$, and let $\tilde{f} \colon \beta S \to X$ be its continuous extension. Then for all $p \in \beta S$,
\[
  \tilde{f}(p) = p\text{-}\lim_{s\in S}  f(s).
\]
\end{theorem}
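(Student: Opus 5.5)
The plan is to verify the defining property of the $p$-limit directly, using only the continuity of $\tilde f$ and the description of the basic clopen neighbourhoods $\overline{A}^{\,\beta S}$ in $\beta S$. Implicitly $X$ is compact Hausdorff, since that is when the continuous extension $\tilde f$ exists by the universal property of $\beta S$ and when $p$-limits are unique by Theorem~\ref{thm:ultralimit-basic}. So it suffices to show that $\tilde f(p)$ satisfies the definition of $p\text{-}\lim_{s\in S} f(s)$. That is, I will show that for every neighbourhood $U$ of $\tilde f(p)$ we have $\{s\in S: f(s)\in U\}\in p$.

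First, fix $p\in\beta S$ and an open neighbourhood $U$ of $\tilde f(p)$. By continuity of $\tilde f$, the preimage $\tilde f^{-1}(U)$ is an open subset of $\beta S$ containing $p$. Since the sets $\overline{A}^{\,\beta S}$ form a base for the topology, there is some $A\subseteq S$ with $p\in\overline{A}^{\,\beta S}\subseteq\tilde f^{-1}(U)$. By definition of $\overline{A}^{\,\beta S}$, this means $A\in p$.

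Second, for each $s\in A$ the principal ultrafilter $p_s$ contains $A$, so $p_s\in\overline{A}^{\,\beta S}$. Because $\tilde f$ extends $f$ under the identification $s\leftrightarrow p_s$, we get $f(s)=\tilde f(p_s)\in U$. Hence $A\subseteq\{s\in S: f(s)\in U\}$. Since ultrafilters are closed under supersets, $\{s: f(s)\in U\}\in p$. An arbitrary neighbourhood of $\tilde f(p)$ contains such an open $U$, so the same conclusion holds for every neighbourhood.

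This shows that $\tilde f(p)$ is a $p$-limit of $(f(s))_{s\in S}$. By the uniqueness part of Theorem~\ref{thm:ultralimit-basic}, it equals $p\text{-}\lim_{s\in S} f(s)$. There is no real obstacle here. The only point needing care is the step that produces the basic set $A$ inside $\tilde f^{-1}(U)$ with $A\in p$, and the silent use of the Hausdorff property for uniqueness.
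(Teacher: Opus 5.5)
Your proof is correct and is the standard argument. The paper gives no proof of its own and simply cites Hindman--Strauss, where the proof has the same shape: continuity and the clopen base give $A\in p$ with $\tilde f[\overline{A}^{\,\beta S}]\subseteq U$, and then $A\subseteq\{s\in S: f(s)\in U\}$. Since the paper defines $p\text{-}\lim_{s\in S}x_s=x$ by the neighbourhood condition, your first three paragraphs already establish the displayed identity. The appeal to uniqueness, and hence to Hausdorffness, is needed only to treat $p\text{-}\lim$ as a single well-defined value.
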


 We now introduce the notion of tensor product, following \cite{Luperi Baglini-2019}. For the case $S_1=S_2=S$, we refer to \cite{HindmanStrauss2012}.

 Let $S_1$ and $S_2$ be discrete spaces, and let $p\in\beta  S_1$ and $q\in\beta  S_2$. The \emph{tensor product} $p\otimes q\in\beta( S_1 \times  S_2)$ is defined by
\[
A\in p\otimes q \Longleftrightarrow\ 
\bigl\{x\in  S_1:\{y\in S_2:(x,y)\in A\}\in q\bigr\}\in p
\]
for every $A\subseteq  S_1 \times S_2$.
More generally, for $k\geq2$ and $p_i\in\beta S_i$ $(1\leq i\leq k)$,
the ultrafilter $p_1\otimes\cdots\otimes p_k \in
\beta\Bigl(\prod_{i=1}^k S_i\Bigr)$
is defined recursively by $p_1\otimes\cdots\otimes p_k :=(p_1\otimes\cdots\otimes p_{k-1})\otimes p_k$.

\begin{lemma}
Let $S_1, S_2$ be discrete spaces, let $p\in \beta S_1$ and $q\in \beta S_2$, and let $X$ be a compact Hausdorff topological space.
Then for every map $f:S_1\times S_2\to X$,
\[
(p\otimes q)\text{-}\lim_{(x,y)\in S_1\times S_2}f(x,y)
=p\text{-}\lim_{x\in S_1}q\text{-}\lim_{y\in S_2}f(x,y).
\]
\end{lemma}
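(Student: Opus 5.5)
To prove the iterated-limit identity, I will reduce it to checking neighbourhoods of a single point. By Theorem~\ref{thm:ultralimit-basic}, each inner limit $g(x):=q\text{-}\lim_{y\in S_2}f(x,y)$ exists in the compact space $X$. Applying the same theorem to the family $(g(x))_{x\in S_1}$, the outer limit $z:=p\text{-}\lim_{x\in S_1}g(x)$ also exists. Likewise the left-hand side $w:=(p\otimes q)\text{-}\lim f(x,y)$ exists. Since $X$ is Hausdorff, limits are unique, so it suffices to show that $f$ converges to $z$ along $p\otimes q$. Concretely, for every open neighbourhood $U$ of $z$ I want
\[
A_U:=\{(x,y)\in S_1\times S_2: f(x,y)\in U\}\in p\otimes q .
\]

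The key step uses regularity of $X$ (compact Hausdorff spaces are regular). Given an open $U\ni z$, I choose an open $V$ with $z\in V\subseteq\overline V\subseteq U$. Since $z=p\text{-}\lim g(x)$, the set $B:=\{x:g(x)\in V\}$ belongs to $p$. Fix $x\in B$. Then $g(x)\in V$, and $V$ is an open neighbourhood of $g(x)$ contained in $U$. Since $g(x)=q\text{-}\lim_y f(x,y)$, the section $\{y: f(x,y)\in V\}$ lies in $q$. This section is contained in $\{y:(x,y)\in A_U\}$, and ultrafilters are upward closed, so the latter set is also in $q$. Hence
\[
B\subseteq\{x:\{y:(x,y)\in A_U\}\in q\},
\]
so this set lies in $p$. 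By the definition of the tensor product, $A_U\in p\otimes q$.

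Therefore $f\to z$ along $p\otimes q$, and uniqueness gives $w=z$, which is the claimed identity.

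There is no real obstacle here. The only point needing care is that each inner limit $g(x)$ must land in a set that is open and contained in $U$. Here $g(x)\in V$ with $V$ open, so $V$ itself serves as a neighbourhood of $g(x)$ inside $U$, and regularity is used only to pick $V$ conveniently. The Hausdorff hypothesis is what guarantees that the limits are unique, so the identity is meaningful.
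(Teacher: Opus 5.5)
Your proof is correct and follows the paper's route: the paper just says the identity ``follows immediately from the definition,'' and you have carried out that unwinding of the tensor-product and ultrafilter-limit definitions. The regularity step is unnecessary, since $\{x: g(x)\in U\}\in p$ already works directly: $U$ is itself an open neighbourhood of each such $g(x)$.
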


\begin{proof}
 This follows immediately from the definition.
\end{proof}

 We recall the definition of the pushforward of an ultrafilter; for further details, we refer the reader to \cite{Benhamou2024, Lurie}.

Let $Z$ and $S$ be sets, and let $\phi:Z\to S$ be a map. For an ultrafilter $p\in\beta Z$, the \emph{pushforward} (or \emph{image}) of $p$ under $\phi$ is the ultrafilter
$\phi_*p$ on $S$ defined by
\[
\phi_*p:=\left\{A\subseteq S:\phi^{-1}(A)\in p\right\}.
\]
Equivalently,
\[
A\in\phi_*p\ \Longleftrightarrow\  \phi^{-1}(A)\in p.
\]

 The following standard property of pushforward ultrafilters will be used in in what follows.
\begin{lemma} \label{pushforward}
Let $Z$ and $S$ be sets, let $\phi:Z\to S$ be a map,
and let $p\in\beta Z$. If $X$ is a compact Hausdorff space,
then for every map $f:S\to X$,
\[
(\phi_*p)\text{-}\lim_{s\in S}f(s)
=p\text{-}\lim_{z\in Z}f(\phi(z)).
\]
\end{lemma}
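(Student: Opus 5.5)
The plan is to unwind both sides to the definition of an ultrafilter limit. We reduce the statement to the equality $(\phi_*p)^{-1}$-images of neighbourhoods of the limit point, and then use the uniqueness of limits in a Hausdorff space from Theorem~\ref{thm:ultralimit-basic}.

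\textbf{Setup.} Since $X$ is compact Hausdorff, Theorem~\ref{thm:ultralimit-basic} shows that both limits exist and are unique. Set
\[
x:=p\text{-}\lim_{z\in Z}f(\phi(z)),
\]
which is the limit of the family $(f(\phi(z)))_{z\in Z}$ along $p$. It then suffices to show that $x$ satisfies the defining property of $(\phi_*p)\text{-}\lim_{s\in S}f(s)$. By uniqueness, that identifies the two limits.

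\textbf{Verification.} Let $U$ be any neighbourhood of $x$. By the definition of $x$,
\[
\{z\in Z: f(\phi(z))\in U\}\in p.
\]
This set is exactly $\phi^{-1}(A)$, where $A=\{s\in S: f(s)\in U\}$. By the definition of the pushforward, $\phi^{-1}(A)\in p$ means precisely $A\in \phi_*p$. Since $U$ was arbitrary, $x=(\phi_*p)\text{-}\lim_{s\in S}f(s)$.

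\textbf{Main obstacle.} There is essentially none. The only point needing care is to invoke Hausdorffness, via Theorem~\ref{thm:ultralimit-basic}, so that the limit is a well-defined point rather than merely some point satisfying the neighbourhood condition. Compactness is used only to guarantee that the right-hand limit exists in the first place.
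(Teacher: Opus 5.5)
Your argument is correct. The paper gives no proof of its own: it states the lemma as a standard fact and cites Benhamou and Lurie. Your direct verification from the definitions is the standard one, and the ingredients you use are exactly the right ones: the identity $\{z\in Z: f(\phi(z))\in U\}=\phi^{-1}(f^{-1}(U))$, the equivalence $\phi^{-1}(A)\in p \iff A\in\phi_*p$, and Hausdorffness for uniqueness of the limit. The only blemish is the garbled phrase ``$(\phi_*p)^{-1}$-images of neighbourhoods'' in your opening. It should simply say that the two neighbourhood conditions defining the limits coincide.
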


\subsection{Adequate Partial Semigroups}
\label{sec:adequate-partial-semigroups}
We reall the definition of adequate partial semigroups, and some properties, see \cite{HindmanStrauss2012}.

Let $(S,\cdot)$ be a partial semigroup, that is, a set $S$
equipped with a partially defined associative operation. For
$s\in S$, put
\[
\varphi_S(s):=\{t\in S:s\cdot t\text{ is defined}\}.
\]
For a nonempty finite subset $F\subseteq S$, put
\[
\sigma_S(F):=\bigcap_{s\in F}\varphi_S(s).
\]
The partial semigroup $S$ is called \emph{adequate} if $\sigma_S(F)\neq\varnothing$ for every nonempty finite nonempty $F\subseteq S$.

For $s\in S$ and $A\subseteq S$, define
\[
s^{-1}A:=\{t\in\varphi_S(s):s\cdot t\in A\}.
\]

Identify $\beta S$ with the space of ultrafilters on the discrete
space $S$. Following \cite[Definition 4.22.1]{HindmanStrauss2012},
define
\[
\delta S:=\bigcap_{s\in S}
\overline{\varphi_S(s)}^{\,\beta S}.
\]
Equivalently,
\[
p\in\delta S\ \Longleftrightarrow\ \varphi_S(s)\in p, \text{ for every}s\in S.
\]
If $S$ is adequate, then $\delta S$ is nonempty and is a closed
subspace of $\beta S$.

For $p\in\beta S$ and $q\in\delta S$, define
\[
A\in p\cdot q \Longleftrightarrow 
\{s\in S:s^{-1}A\in q\}\in p, \ A\subseteq S.
\]
Equivalently,
\[
p\cdot q=p\text{-}\lim_{s\in S}
\left(q\text{-}\lim_{t\in\varphi_S(s)}s\cdot t\right).
\] 

\begin{theorem}\cite[Theorem 4.22.2]{HindmanStrauss2012}
  Let $(S, \cdot)$ be a partial semigroup. Then $\delta S \neq \varnothing$ if and only if $S$ is adequate, in which case $(\delta S, \cdot)$ is a compact semigroup where, for $p, q \in \delta S$ and $A \subseteq S$,
  \[
    A \in p \cdot q \  \text{if and only if} \  \{ s \in S : s^{-1}A \in q \} \in p.
  \]
\end{theorem}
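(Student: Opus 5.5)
We must prove Theorem 4.22.2 of Hindman–Strauss: $\delta S\neq\varnothing$ iff $S$ is adequate, and in that case $(\delta S,\cdot)$ is a compact (right topological) semigroup with the stated membership rule.

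\emph{Equivalence.} The sets $\overline{\varphi_S(s)}^{\,\beta S}$, $s\in S$, are clopen in the compact space $\beta S$, and $\delta S$ is their intersection. For a finite nonempty $F\subseteq S$ we have
\[
\bigcap_{s\in F}\overline{\varphi_S(s)}^{\,\beta S}=\overline{\sigma_S(F)}^{\,\beta S},
\]
which is nonempty exactly when $\sigma_S(F)\neq\varnothing$. If $S$ is adequate, these closed sets have the finite intersection property, so $\delta S\neq\varnothing$ by compactness. Conversely, if $p\in\delta S$, then every $\varphi_S(s)$ lies in $p$. Hence each finite intersection $\sigma_S(F)$ lies in $p$ and so is nonempty. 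Being an intersection of closed sets, $\delta S$ is closed, hence compact.

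\emph{The product is well defined.} Fix $p\in\beta S$ and $q\in\delta S$, and set $\mathcal{F}=\{A\subseteq S:\{s:s^{-1}A\in q\}\in p\}$. We check that $\mathcal{F}$ is an ultrafilter using three identities:
\[
s^{-1}(A\cap B)=s^{-1}A\cap s^{-1}B,\qquad s^{-1}(S\setminus A)=\varphi_S(s)\setminus s^{-1}A,\qquad s^{-1}S=\varphi_S(s).
\]
Since $\varphi_S(s)\in q$ for every $s$, the last identity shows that $s^{-1}A\notin q$ iff $s^{-1}(S\setminus A)\in q$. Closure under supersets and under intersections is immediate, $\varnothing\notin\mathcal{F}$, and exactly one of $A$ and $S\setminus A$ lies in $\mathcal{F}$.

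\emph{$\delta S$ is closed under the product.} This is the step where the partial structure really enters, and I expect it to be the main obstacle. Take $p,q\in\delta S$ and $u\in S$; we must show $\varphi_S(u)\in p\cdot q$. The approach is to use associativity in its partial-semigroup form: if $u\cdot s$ is defined and $(u\cdot s)\cdot t$ is defined, then $s\cdot t$ is defined and $u\cdot(s\cdot t)=(u\cdot s)\cdot t$. Consider $s\in\varphi_S(u)$, a set that belongs to $p$. For such $s$, every $t\in\varphi_S(u\cdot s)\cap\varphi_S(s)$ satisfies $s\cdot t\in\varphi_S(u)$. The set $\varphi_S(u\cdot s)\cap\varphi_S(s)$ belongs to $q$ because $q\in\delta S$. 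Therefore $s^{-1}\varphi_S(u)\in q$ for $p$-almost every $s$, which gives $\varphi_S(u)\in p\cdot q$.

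\emph{Associativity and right continuity.} For associativity, we show that $A\in(p\cdot q)\cdot r$ iff $A\in p\cdot(q\cdot r)$. Both conditions unwind, via the identity $t^{-1}(s^{-1}A)=(s\cdot t)^{-1}A$ on the appropriate defined domains, to the same statement about $p$, $q$ and $r$. Membership of $q$ and $r$ in $\delta S$ ensures that the domain restrictions cost only sets lying in $q$ and $r$. For right continuity, fix $q$ and note that $p\mapsto p\cdot q$ pulls the basic clopen set $\overline{A}^{\,\beta S}$ back to the clopen set $\overline{\{s:s^{-1}A\in q\}}^{\,\beta S}$. Thus $\rho_q$ is continuous, and $(\delta S,\cdot)$ is a compact right topological semigroup.
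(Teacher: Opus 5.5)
The paper gives no proof of this statement; it is quoted from Hindman--Strauss. Your argument is the standard proof from that source (finite intersection property for the equivalence, direct check that $p\cdot q$ is an ultrafilter lying in $\delta S$, associativity via $t^{-1}(s^{-1}A)=(s\cdot t)^{-1}A$, right continuity by pulling back basic clopen sets), and it is correct.

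Your closure and associativity steps rely on the strong associativity axiom: if either of $(a\cdot b)\cdot c$ or $a\cdot(b\cdot c)$ is defined, then so is the other and they agree. Under this axiom, $(s\cdot t)^{-1}A=t^{-1}(s^{-1}A)$ holds exactly for $t\in\varphi_S(s)$, and $t^{-1}(s^{-1}A)=\varnothing$ for $t\notin\varphi_S(s)$, so the two membership conditions are literally the same. The paper only says ``partially defined associative operation'', so state this axiom up front, together with the tacit assumption $S\neq\varnothing$.
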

 Moreover, if $(S,\cdot)$ is adequate, then $(\delta S,\cdot)$
is a compact right-topological semigroup.

\section{Sums with gaps and paths in the cube}\label{sec:cube}
In this section, we develop a path representation for the sets in $\Sd$. We first establish an identity that expresses $T^{p_\alpha}$ in terms of the elements $Q_t$. We then encode a finite set $\alpha$ by a state path in $E_d$, whose transitions determine the corresponding event sequence. Finally, we prove that every admissible timed path satisfying the non-return condition is uniquely realized by a set $\alpha\in\Sd$, and recover $T^{p_\alpha}$ from the events along the path.

\medskip

Fix $d\geq1$ and a sequence $P=(p_j)_{j\geq1}$ of positive integers.  We use
$\Sd$ and $p_\alpha$ as defined in Section~\ref{sec:intro}.  Empty sums below are zero.  For $t\geq1$, set
\[
  M_t=\sum_{\substack{1\leq j\leq t-d\\ j\equiv t\pmod d}}p_j,
 \  Q_t=T^{M_t}.  
\]
Let $\alpha\subseteq\N$ be finite.  
Let $a_j=0$ for $j\leq0$, $a_j=\one_\alpha(j)$ for $j>0$, where 
$\one_\alpha(j)$ is the indicator function of $\alpha$.
Then $a_j=0$ for all sufficiently large $j$ since $\alpha$ is finite. We call $(a_j)_{j\in\Z}$ the \emph{sequence associated with} $\alpha$.

We first record the following identity, which will be used later to express $T^{p_\alpha}$ in terms of the event sequence.

\begin{lemma}\label{lem:telescope}
For every finite $\alpha\subseteq\N$, let $(a_j)_{j\in\Z}$ be its associated sequence. Then
\[
 T^{p_\alpha}=\prod_{t\geq1} Q_t^{\,a_{t-d}-a_t}.   
\]
Only finitely many factors on the right are different from the identity.
\end{lemma}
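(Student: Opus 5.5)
Since all $Q_t$ are powers of $T$, they commute, and the identity reduces to an identity of integer exponents. The plan is to verify
\[
 p_\alpha=\sum_{t\geq1}(a_{t-d}-a_t)M_t ,
\]
with only finitely many nonzero terms. Finiteness is immediate. Since $\alpha$ is finite, there is $N$ with $a_j=0$ for all $j>N$. Then $a_{t-d}-a_t=0$ for all $t>N+d$. So the product $\prod_{t\geq1}Q_t^{a_{t-d}-a_t}$ has only finitely many nontrivial factors. The product is well defined as the single power $T^{\sum_t (a_{t-d}-a_t)M_t}$.

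The key step is a recursion for $M_t$. By definition, $M_t$ sums $p_j$ over $j\le t-d$ with $j\equiv t\pmod d$. Hence
\[
 M_{t+d}=M_t+p_t \quad (t\geq1),
\]
and $M_t=0$ for $1\le t\le d$. Equivalently, $p_t=M_{t+d}-M_t$ for every $t\geq1$. Substituting this,
\[
 p_\alpha=\sum_{t\geq1}a_t\,p_t=\sum_{t\geq1}a_t\,(M_{t+d}-M_t)=\sum_{t\geq1}a_tM_{t+d}-\sum_{t\geq1}a_tM_t .
\]
All sums here are finite.

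Reindex the first sum with $s=t+d$. It becomes $\sum_{s\geq d+1}a_{s-d}M_s$. For $1\le s\le d$ we have $a_{s-d}=0$, because $s-d\le 0$. So the first sum equals $\sum_{s\geq1}a_{s-d}M_s$. Combining gives $\sum_{t\geq1}(a_{t-d}-a_t)M_t$, as required. Applying $T$ to this exponent gives the lemma, since $T^{m}T^{n}=T^{m+n}$.

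There is no real obstacle. The only points needing care are the boundary conventions: $a_j=0$ for $j\le 0$, and $M_t=0$ for $t\le d$ (an empty sum). These conventions are exactly what make the reindexed sum start at $t=1$ with no correction terms.
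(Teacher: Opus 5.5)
Your proof is correct and is essentially the paper's argument: both reduce the identity to the exponent equation $p_\alpha=\sum_{t\geq1}(a_{t-d}-a_t)M_t$ and verify it by telescoping, with the same finiteness observation. The only difference is where the telescoping happens. The paper expands $M_t$, interchanges the sums, and telescopes $\sum_{k\geq1}(a_{j+(k-1)d}-a_{j+kd})=a_j$. You telescope on the $M$ side via $p_t=M_{t+d}-M_t$, which is the same summation by parts written the other way round.
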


\begin{proof}
Since $\alpha$ is finite, the product is in fact finite.
Choose $N$ so that $a_t=0$ for every $t>N$.  If $t>N+d$, then
\[
 a_{t-d}=a_t=0,
\]
so only finitely many factors are nontrivial.

Since $Q_t=T^{M_t}$, 
\[
 \prod_{t\geq1} Q_t^{\,a_{t-d}-a_t}
 =T^{\sum_{t\geq1}(a_{t-d}-a_t)M_t}.
\]
Expanding the exponent yields 
\[
\begin{aligned}
 \sum_{t\geq1}(a_{t-d}-a_t)M_t
 &=\sum_{t\geq1}(a_{t-d}-a_t)
   \sum_{\substack{1\leq j\leq t-d\\ j\equiv t\pmod d}}p_j\\
 &=\sum_{j\geq1}p_j
   \sum_{\substack{t= j+kd, \ k\ge 1 }}
   (a_{t-d}-a_t).
\end{aligned}
\]
For every $K\geq1$,
\[
 \sum_{k=1}^{K}
 \bigl(a_{j+(k-1)d}-a_{j+kd}\bigr)
 =a_j-a_{j+Kd}.
\]
Since $a_{j+Kd}=0$ for all sufficiently large $K$,
\[
 \sum_{\substack{t= j+kd, \ k\ge 1}}
 (a_{t-d}-a_t)=a_j.
\]
Therefore
\[
 \sum_{t\geq1}(a_{t-d}-a_t)M_t= \sum_{j\geq1}a_jp_j=\sum_{j\in\alpha}p_j =p_\alpha. 
\]
Hence
\[
 \prod_{t\geq1} Q_t^{\,a_{t-d}-a_t}=T^{p_\alpha}.
\]
\end{proof}

The exponent $a_{t-d}-a_t$ records precisely whether the state changes at time $t$. We now encode these changes by a path in the finite state space $E_d=\{0,1\}^d$. This will allow us to interpret the factors $Q_t^{a_{t-d}-a_t}$ as labels attached to the transitions of the path.

Set $E_d=\{0,1\}^d$ and index its coordinates by $\Z/d\Z$.
For a finite $\alpha\subseteq\N$, let $(a_j)_{j\in\Z}$ be its associated
sequence.  For $t\in\Nzero$ and $r\in\Z/d\Z$, let
\[
 j_r(t)=\max\{j\leq t:j\equiv r\pmod d\}.
\]
Define the map $x=x_\alpha:\Nzero\to E_d$ by
\[
 x(t)=\bigl(x_r(t)\bigr)_{r\in\Z/d\Z},
 \  x_r(t)=a_{j_r(t)}.
\]
We call $x(t)$ the \emph{state of the path associated with $\alpha$ at time $t$}. Thus, the state records the most recent value of the sequence $(a_j)$ in each residue class modulo $d$. In particular, a change of state can occur only when the corresponding residue class is updated.
Since $a_j=0$ for $j\leq0$, we have $x(0)=0_d$, where $0_d:=(0,\ldots,0)\in E_d$.

When passing from time $t-1$ to time $t$, only the coordinate
$r_t=t\pmod d$ can change, since
\[
 x_{r_t}(t-1)=a_{t-d}, \  x_{r_t}(t)=a_t.
\]
We denote a change $0\to1$ by $+r_t$ and a change $1\to0$ by $-r_t$. The signs are chosen so that the event labels reproduce exactly the corresponding factors in the telescoping identity. 
When $t\equiv r\pmod d$, define the corresponding labels by
\[
 v_t(+r)=Q_t^{-1}, \ v_t(-r)=Q_t.
\]
We denote by $\mathcal A_d=\{+r,-r:r\in\Z/d\Z\}$ the \emph{alphabet of events}.
 
For $r\in\Z/d\Z$, set
\[
 D_{+r}=\{x\in E_d:x_r=0\}, \  D_{-r}=\{x\in E_d:x_r=1\},
\]
and define the partial maps
\[
 \tau_{+r}:D_{+r}\longrightarrow E_d,
 \  \tau_{-r}:D_{-r}\longrightarrow E_d
\]
by
\[
 (\tau_{+r}(x))_s= \begin{cases}
  1,&s=r,\\
  x_s,&s\neq r,
 \end{cases}
 \ 
 (\tau_{-r}(x))_s=\begin{cases}
  0,&s=r,\\
  x_s,&s\neq r.
 \end{cases}
\]

We now record the times at which the state changes. Define
$$
C_\alpha=\{t\geq1:x(t-1)\neq x(t)\}
       =\{t\geq1:a_{t-d}\neq a_t\}.
$$
We call $C_\alpha$ the \emph{set of event times} of $\alpha$. Since $\alpha$ is finite and nonempty, $C_\alpha$ is finite and nonempty.
Write
\[
 t_-=\min C_\alpha, \  t_+=\max C_\alpha.
\]
Enumerate $C_\alpha$ as $\{t_1<\cdots<t_k\}$.  For $1\leq j\leq k$, let
$r_j\equiv t_j\pmod d$ and define
\[
 \varepsilon_j=
 \begin{cases}
  +r_j,&a_{t_j-d}=0,\ a_{t_j}=1,\\
  -r_j,&a_{t_j-d}=1,\ a_{t_j}=0.
 \end{cases}
\]
We call $(\varepsilon_1,\ldots,\varepsilon_k)$ the \emph{event symbols} of
$\alpha$.  The \emph{event sequence} of $\alpha$ is
\[
  \operatorname{Ev}(\alpha)
 =((t_1,\varepsilon_1),\ldots,(t_k,\varepsilon_k)).  
\]

The next lemma translates the defining condition for $\Sd$ into a condition on the associated state path. This characterization will be used to recognize when a timed path comes from a set in $\Sd$.

\begin{lemma}\label{lem:punctured}
Let $\alpha\subseteq\N$ be finite and nonempty.  Then $\alpha\in\Sd$ if and
only if $x(t)\neq0_d$ for every integer $t$ with $t_-<t<t_+$.
\end{lemma}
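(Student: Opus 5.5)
The plan is to first pin down the event times in terms of $\alpha$ itself. Write $\alpha=\{i_1<\cdots<i_m\}$. Since $a_t=\one_\alpha(t)$ for $t\geq1$ and $a_j=0$ for $j\leq0$, the condition $a_{t-d}\neq a_t$ holds exactly when precisely one of $t$ and $t-d$ lies in $\alpha$. I will show $t_-=i_1$ and $t_+=i_m+d$. For the first: $a_{i_1}=1$ while $a_{i_1-d}=0$ because $i_1-d<i_1$; and for $t<i_1$ both $a_t$ and $a_{t-d}$ vanish. For the second: $a_{i_m+d-d}=1$ while $a_{i_m+d}=0$; and for $t>i_m+d$ both values vanish.

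Next I characterize $x(t)$ for $t$ in this window. By definition, $x(t)=0_d$ if and only if $a_j=0$ for all $j$ with $t-d<j\leq t$. The reason is that the indices $j_r(t)$, for $r\in\Z/d\Z$, are exactly the $d$ integers $t-d+1,\ldots,t$. So $x(t)=0_d$ means that the window $(t-d,t]$ contains no element of $\alpha$.

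For the forward direction, assume $\alpha\in\Sd$ and take $i_1<t<i_m+d$. Let $i_j$ be the largest element of $\alpha$ with $i_j\leq t$; it exists because $t>i_1$. If $j=m$, then $t<i_m+d$ puts $i_m$ in $(t-d,t]$. If $j<m$, then $i_{j+1}>t$ and $i_{j+1}-i_j\leq d$ give $i_j\geq i_{j+1}-d>t-d$. In either case the window meets $\alpha$, so $x(t)\neq0_d$.

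For the converse, suppose some gap satisfies $i_{j+1}-i_j>d$, and set $t=i_j+d$. Then $i_1\leq i_j<t<i_{j+1}\leq i_m<i_m+d$, so $t_-<t<t_+$. The window $(i_j,i_j+d]$ contains no element of $\alpha$: elements up to $i_j$ lie outside it, and the next element $i_{j+1}$ exceeds $i_j+d$. Hence $x(t)=0_d$, which contradicts the hypothesis.

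The only delicate point is getting the endpoints right, namely $t_-=i_1$ and $t_+=i_m+d$, and checking that the witness $t=i_j+d$ lies strictly inside $(t_-,t_+)$. Both are routine index checks.
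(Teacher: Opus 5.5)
Your proof is correct and follows essentially the same route as the paper: you identify $t_-=i_1$ and $t_+=i_m+d$, characterize $x(t)=0_d$ as $\alpha\cap[t-d+1,t]=\varnothing$, and use the same witness $t=i_j+d$ for the converse. The only difference is that you prove the forward direction directly, by taking the largest $i_j\le t$. The paper instead argues by contradiction, using a gap $i_j<t<i_{j+1}$ with the convention $i_{m+1}=i_m+d$.
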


\begin{proof}
Let $\alpha=\{i_1<i_2<\cdots<i_m\}$, and let $(a_j)_{j\in \Z}$ be its associated sequence.
The coordinates of $x(t)$ are precisely
$a_{t-d+1},\ldots,a_t$, in some order.  Hence
\[
 x(t)=0_d  \Longleftrightarrow   \alpha\cap[t-d+1,t]= \varnothing.   
\]
Since $a_t = a_{t-d} = 0$ for all $t < i_1$ and
$(a_{i_1-d},a_{i_1})=(0,1)$, we have $t_-=i_1$.
Similarly, since $(a_{i_m},a_{i_m+d})=(1,0)$ and
$a_t=a_{t-d}=0$ for all $t>i_m+d$, it follows that $t_+=i_m+d$.

Assume first that $\alpha\in\Sd$.  Put $i_{m+1}=i_m+d$. 
We will show that for every integer $i_1<t<i_m+d$, $x(t)\neq 0_d$.
Otherwise, there exists $1\le j \le m$, such that $i_j<t<i_{j+1}$ and $x(t)=0_d$.
Then we have that
\[
 i_j\leq t-d<t<i_{j+1},
\]
and therefore $i_{j+1}-i_j>d.$
For $j<m$ this contradicts $\alpha\in\Sd$.  For $j=m$ it contradicts $i_{m+1}-i_m=d.$
Hence $x(t)\neq0_d$ whenever $t_-<t<t_+$.

Conversely, assume that $x(t)\neq0_d$ for every integer $t$ with
$t_-<t<t_+$. We will show that $\alpha \in \Sd$. Otherwise, there exists $1\le j<m$ such that $i_{j+1}-i_j>d$.
Let $t=i_j+d.$
Then
\[
 t_-\le i_j<t<i_{j+1}\leq i_m<t_+
\]
and
\[
 \alpha\cap[t-d+1,t]=\alpha\cap[i_j+1,i_j+d]=\varnothing.
\]
Thus, $x(t)=0_d$, a contradiction.  So for every $1\le j<m$ we have
\[
 i_{j+1}-i_j\leq d
\]
and hence $\alpha\in\Sd$.
\end{proof}

The preceding lemma shows that the condition $\alpha\in\Sd$ is equivalent to the associated state path avoiding $0_d$ between its first and last events. We now abstract this path structure and introduce admissible words and their timed realizations.

\begin{definition}
Let $e_0=(1,0,\ldots,0)\in E_d$.
A \emph{word} is a finite nonempty sequence
$w=(\varepsilon_1,\ldots,\varepsilon_k)$ for some $k\in \mathbb{N} $ with $\varepsilon_i\in\mathcal A_d$.  

The word $w=(\varepsilon_1,\ldots,\varepsilon_k)$ is \emph{admissible from $e_0$} if there are 
$x_0,x_1,\ldots,x_k\in E_d$ with $x_0=e_0$ and
$x_i=\tau_{\varepsilon_i}(x_{i-1})$ for every $1\leq i\leq k$.
 The \emph{state path} of $w$ is
\[
 e_0 \xrightarrow{\varepsilon_1 } x_1 \xrightarrow{\varepsilon_2 }x_2\xrightarrow{\varepsilon_3 } \cdots 
 \xrightarrow{\varepsilon_k } x_k,
\]
and simply $e_0 \xrightarrow{ w } x_k$.
We call $(x_0, x_1, \ldots, x_k)$ the \emph{state sequence} of $w$. The state sequence, when it exists, is unique.

Let $\cW$ be the set of words admissible from $e_0$ whose state sequence $(x_0, x_1, \ldots, x_k)$
satisfies $x_k=e_0$ and $x_i\neq0_d$ for every $0\leq i\leq k$.

Let $w=(\varepsilon_1,\ldots,\varepsilon_k)\in\cW$, where
$\varepsilon_j=\pm r_j$.  Choose integers $t_1<\cdots<t_k$ such that
$t_1\geq1$ and $t_j\equiv r_j\pmod d$ for every $j$.  The sequence
\[
 s=((t_1,\varepsilon_1),\ldots,(t_k,\varepsilon_k))
\]
is called a \emph{realization} of $w$.
Its \emph{support} is $\supp(s)=\{t_1,\ldots,t_k\}$.

Let $\cL$ denote the set of all realizations of words in $\cW$.  Suppose that
\[
 s=((t_1,\varepsilon_1),\ldots,(t_k,\varepsilon_k)),
 \ 
 u=((z_1,\eta_1),\ldots,(z_m,\eta_m))
\]
belong to $\cL$.  If $\max\supp(s)<\min\supp(u)$, define
\[
 su=((t_1,\varepsilon_1),\ldots,(t_k,\varepsilon_k),
     (z_1,\eta_1),\ldots,(z_m,\eta_m))
\]
and leave the product undefined otherwise.
These definitions provide the combinatorial class of timed paths that will be used below. In particular, $\mathcal L$ will serve as the underlying partial semigroup of realizations.

\end{definition}

Let
\[
 s=((t_1,\varepsilon_1),\ldots,(t_k,\varepsilon_k)),
 \ 1\leq t_1<\cdots<t_k,
\]
where $\varepsilon_j=\pm r_j, r_j\in \Z/ d\Z$ and $t_j\equiv r_j\pmod d$.  
Define
$$
x^s:\{0,1,\ldots,t_k\}\longrightarrow E_d
$$
recursively by setting $x^s(0)=0_d$ and, for $1\le t\le t_k$,
$$
x^s(t)=\begin{cases}
\tau_{\varepsilon_j}\bigl(x^s(t-1)\bigr),
& t=t_j\text{ for some }j\in\{1,\ldots,k\},\\[2mm]
x^s(t-1),
& t\notin\{t_1,\ldots,t_k\},
\end{cases}
$$
whenever the required transition is defined.
 We call $s$ an \emph{admissible timed path from $0_d$ to $0_d$} if 
\[
 x^s(t_j-1)\in\operatorname{Dom}(\tau_{\varepsilon_j}), \ 
  1\leq j\leq k 
\]
and $x^s(t_k)=0_d$.

 We can now prove the converse realization statement: every admissible timed path that avoids $0_d$ between its first and last events determines a unique set $\alpha\in\Sd$.

\begin{lemma}\label{lem:path-realisation}
Let $s=((t_1,\varepsilon_1),\ldots,(t_k,\varepsilon_k))$ be a nonempty admissible timed path from $0_d$ to $0_d$.  Suppose that $x^s(t)\neq0_d$ for
every integer $t$ with $t_1<t<t_k$.  Then there exists a unique
$\alpha\in\Sd$ such that
\[
  \operatorname{Ev}(\alpha)
 =((t_1,\varepsilon_1),\ldots,(t_k,\varepsilon_k)).  
\]
Moreover, 
\[
 T^{p_\alpha}=\prod_{j=1}^k v_{t_j}(\varepsilon_j).   
\]
\end{lemma}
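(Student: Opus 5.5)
The plan is to read $\alpha$ off the state function $x^s$ and then check everything against Lemma~\ref{lem:punctured} and Lemma~\ref{lem:telescope}. Define a sequence $(a_j)_{j\in\Z}$ by $a_j=0$ for $j\le 0$ and, for $j\ge1$,
\[
 a_j:=x^s_{r}(\min(j,t_k)),\quad r\equiv j\pmod d .
\]
This is the value of coordinate $r$ of the path at time $j$ (frozen after $t_k$). Set $\alpha:=\{j\ge1:a_j=1\}$.

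\textbf{Step 1: $\alpha$ is finite and nonempty.} Since $x^s(t_k)=0_d$ and coordinate $r_j$ is only modified at times $t\equiv r_j\pmod d$, we have $a_j=0$ for all $j>t_k$. Hence $\alpha\subseteq[1,t_k]$ is finite. The event $\varepsilon_1$ must be a $+$ event, because $x^s(t_1-1)=0_d$ lies only in domains $D_{+r}$. So $a_{t_1}=1$ and $\alpha\neq\varnothing$.

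\textbf{Step 2: $x_\alpha(t)=x^s(t)$ for $0\le t\le t_k$.} Argue by induction on $t$. Between $t-1$ and $t$, the path $x^s$ can only change coordinate $r_t\equiv t$, and it does so exactly when $t\in\supp(s)$. The function $x_\alpha$ also changes only coordinate $r_t$, from $a_{t-d}$ to $a_t$. By construction $a_t=x^s_{r_t}(t)$ and $a_{t-d}=x^s_{r_t}(t-1)$; the latter holds because coordinate $r_t$ is untouched at times strictly between $t-d$ and $t$, with the case $t-d\le0$ handled by $x^s(0)=0_d$. Consequently
\[
 C_\alpha\cap[1,t_k]=\supp(s),
\]
and the sign of each change matches $\varepsilon_j$, since $\tau_{+r}$ sets coordinate $r$ to $1$ and $\tau_{-r}$ sets it to $0$. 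For $t>t_k$ we have $a_{t-d}=a_t=0$, so no further events occur. Thus $\operatorname{Ev}(\alpha)=s$, $t_-=t_1$ and $t_+=t_k$. Lemma~\ref{lem:punctured} together with the hypothesis $x^s(t)\ne0_d$ for $t_1<t<t_k$ then gives $\alpha\in\Sd$.

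\textbf{Step 3: uniqueness.} Suppose $\operatorname{Ev}(\alpha')=s$. The event sequence determines every change of the state function $x_{\alpha'}$, and $x_{\alpha'}(0)=0_d$. By the same induction, $x_{\alpha'}=x^s$ on $[0,t_k]$ and $x_{\alpha'}$ is constant afterwards. Each $a'_j$ equals $x_{\alpha'}(j)_{j \bmod d}$, so $\alpha'=\alpha$.

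\textbf{Step 4: the product formula.} By Lemma~\ref{lem:telescope},
\[
 T^{p_\alpha}=\prod_t Q_t^{a_{t-d}-a_t}.
\]
The exponent vanishes off $C_\alpha=\supp(s)$. At $t=t_j$ it equals $-1$ for a $+$ event and $+1$ for a $-$ event, which is exactly $v_{t_j}(\varepsilon_j)$. All $Q_t$ are powers of $T$ and hence commute, so the factors can be ordered as $j=1,\dots,k$.

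I expect the main obstacle to be the bookkeeping in Step 2, namely the identification $a_{t-d}=x^s_{r_t}(t-1)$, which requires that residue class $r_t$ is not updated between times $t-d$ and $t$. The admissibility hypothesis, $x^s(t_j-1)\in\operatorname{Dom}(\tau_{\varepsilon_j})$, is precisely what guarantees that each recorded event really is a change of that coordinate.
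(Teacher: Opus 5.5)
Your proof is correct and follows essentially the paper's route: define the $0/1$ sequence from the path, show $x_\alpha=x^s$ on $[0,t_k]$ so that $C_\alpha=\supp(s)$ with matching signs, apply Lemma~\ref{lem:punctured}, obtain uniqueness because the event sequence determines the state path, and finish with Lemma~\ref{lem:telescope}. The only difference is cosmetic. You define $a_j$ explicitly as $x^s_{j \bmod d}(\min(j,t_k))$, while the paper uses the recursion $a_t=a_{t-d}$ off $\supp(s)$; the two definitions coincide, and your version makes the paper's inductive claim nearly immediate. Your argument for $\alpha\neq\varnothing$, via the forced $+$ first event from $0_d$, is also slightly more direct than the paper's.
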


\begin{proof}
Set $a_t=0$ for $t\leq0$.  For $t\geq1$, define
\[
 a_t=
 \begin{cases}
  1,&t=t_j\text{ and }\varepsilon_j=+r_j\text{ for some }j,\\
  0,&t=t_j\text{ and }\varepsilon_j=-r_j\text{ for some }j,\\
  a_{t-d},&t\notin\{t_1,\ldots,t_k\}.
 \end{cases}
\]

\begin{claim} \label{claim:a-s}
    For every $0\leq t\leq t_k$ and every $r\in\Z/d\Z$, 
\[
 a_{j_r(t)}=(x^s(t))_r.
\]
\end{claim}
 
\begin{proof}[Proof of the claim]
 We use induction on $t$.
For $t=0$, since $x^s(0)=0_d, a_j=0, j\leq0$, then for all $r\in\mathbb Z/d\mathbb Z$
\[
a_{j_r(0)}=0=(x^s(0))_r .
\]
 Assume that the statement holds for $t-1$. We prove it for $t$.
If $t=t_j$ for some $j\in\{1,\ldots,k\}$, then $x^s(t)=\tau_{\varepsilon_j}(x^s(t-1)).$
Since $t\equiv r_j\pmod d$, we have $j_{r_j}(t)=t$.
If $\varepsilon_j=+r_j$, then
\[
(x^s(t))_{r_j}=1=a_t=a_{j_{r_j}(t)}.
\]
If $\varepsilon_j=-r_j$, then
\[
(x^s(t))_{r_j}=0=a_t=a_{j_{r_j}(t)}.
\]
For $r\neq r_j$, $(x^s(t))_r=(x^s(t-1))_r$ and $j_r(t)=j_r(t-1)$,
Hence, by the induction hypothesis,
\[
a_{j_r(t)}=a_{j_r(t-1)}=(x^s(t-1))_r=(x^s(t))_r.
\]

Now suppose $t\notin\{t_1,\ldots,t_k\}$,
then $x^s(t)=x^s(t-1)$ and $a_t=a_{t-d}$.
If $t\not\equiv r\pmod d$, then $j_r(t)=j_r(t-1)$, and by the induction hypothesis, 
\[
a_{j_r(t)}=a_{j_r(t-1)}=(x^s(t-1))_r=(x^s(t))_r.
\]
If $t\equiv r\pmod d$, then $j_r(t)=t$ and $a_{j_r(t)}=a_t=a_{t-d}=a_{j_r(t-1)}$.
By the induction hypothesis, $a_{j_r(t-1)}=(x^s(t-1))_r$.
Thus $a_{j_r(t)}=a_{j_r(t-1)}=(x^s(t-1))_r=(x^s(t))_r$.
\end{proof}

Since $x^s(t_k)=0_d$, for every $r\in\mathbb Z/d\mathbb Z$,
by Claim \ref{claim:a-s}, we have $a_{j_r(t_k)}=(x^s(t_k))_r=0$.
Since $\{j_r(t_k):r\in\mathbb Z/d\mathbb Z\}=\{t_k-d+1,\ldots,t_k\}$,
it follows that
\[
a_{t_k-d+1}=\cdots=a_{t_k}=0.
\]
For $t>t_k$, $a_t=a_{t-d}$, thus for $1\leq m\leq d$, one has
\[
a_{t_k+m} =a_{t_k+m-d}=0.
\]
Repeating this argument inductively gives $a_t=0$ for all $t>t_k$.
 
Put $\alpha:=\{t\geq1:a_t =1\}$, then $\alpha $ is finite and nonempty since $x^s(t) \neq 0_d$ for $t_1<t <t_k$.
If $t\notin\{t_1,\ldots,t_k\}$, then $a_{t-d}=a_t$.
Assume that $t=t_j$ for some $1\le j \le k$, then 
\[
 a_{t_j}= (x^s(t_j))_{r_j} \neq  (x^s(t_j-1))_{r_j} =a_{t_j-d}.
\]
Therefore $C_\alpha=\{t_1,\ldots,t_k\}$. Set $t_-=t_1, t_+=t_k$.
 
The event symbol at $t_j$ is determined by
\[
 \varepsilon_j=
 \begin{cases}
 +r_j,&(a_{t_j-d},a_{t_j})=(0,1),\\
 -r_j,&(a_{t_j-d},a_{t_j})=(1,0).
 \end{cases}
\]
Thus $\operatorname{Ev}(\alpha)
 =((t_1,\varepsilon_1),\ldots,(t_k,\varepsilon_k))$.

For every $t$ with $0\leq t\leq t_k$, the state of the path $x^{\alpha}$ associated with $\alpha$  satisfies $x^\alpha(t)=x^s(t)$.
In particular, $x^\alpha(t)\neq0_d$ for every
integer $t$ with $t_-<t<t_+$.
Lemma~\ref{lem:punctured} implies $\alpha\in\Sd$.

For uniqueness, let $\beta\in\Sd$ satisfy $\operatorname{Ev}(\beta)=((t_1,\varepsilon_1),\ldots,(t_k,\varepsilon_k))$,
and let $(b_t)_{t\in \mathbb{Z}}$ be its  associated sequence.
Recall that $a_t=\mathbf 1_\alpha(t)$.
Since $\alpha$ and $\beta$ have the same event sequence, both
$c_t=a_t$ and $c_t=b_t$ satisfy
\[
c_t=\begin{cases}
1,&t=t_j\text{ and }\varepsilon_j=+r_j
   \text{ for some }j,\\
0,&t=t_j\text{ and }\varepsilon_j=-r_j
   \text{ for some }j,\\
c_{t-d},&t\notin\{t_1,\ldots,t_k\},
\end{cases}
\]
where $c_t=0$ for $t\leq0$.
We prove by induction on $t\geq1$ that $a_t=b_t$.
Suppose that $a_m=b_m, 1\leq m<t$.
If $t=t_j$ for some $j$, then,
\[
a_t=b_t=\begin{cases}
1,&\varepsilon_j=+r_j,\\
0,&\varepsilon_j=-r_j.
\end{cases}
\]
If $t\notin\{t_1,\ldots,t_k\}$, then $a_t=a_{t-d}=b_{t-d}=b_t$,
where the middle equality follows from initial condition if $t-d\le 0$ and from the induction hypothesis if $t-d\ge 1$.
Thus $a_t=b_t$ for every $t\geq1$, and consequently  
\[
\alpha=\{t\geq1:a_t=1\}=\{t\geq1:b_t=1\}=\beta.
\]
Hence $\alpha$ is unique.

Since $C_\alpha =\{t_1,\ldots,t_k\}$, $a_{t-d}-a_t=0$ for $t\notin C_\alpha$, and 
\[
 v_{t_j}(\varepsilon_j)
=Q_{t_j}^{\,a_{t_j-d}-a_{t_j}}, 1\le j\le k.  
\]
Hence,
\[
\begin{aligned}
\prod_{j=1}^k v_{t_j}(\varepsilon_j)
&=\prod_{j=1}^kQ_{t_j}^{\,a_{t_j-d}-a_{t_j}}\\
&=\prod_{t\in C_\alpha} Q_t^{\,a_{t-d}-a_t}\\
&=\prod_{t\ge1} 
Q_t^{\,a_{t-d}-a_t}\\
&=T^{p_\alpha},
\end{aligned}
\]
where the last equality follows from Lemma~\ref{lem:telescope}.

\end{proof}

\section{A compact subgroup of the Ellis group}\label{sec:compact-subgroup}
In this section, we construct a compact subgroup of the Ellis group that contains the
limits associated with all admissible words.  The construction proceeds
in three steps.  First, we verify that the ordered-support partial
semigroup $\cL$ is adequate and introduce its tail semigroup
$\delta\cL\subseteq\beta\cL$.  We then extend the map from $\cL$ into the
Ellis group to $\beta\cL$ and show that its restriction to $\delta\cL$
is a semigroup homomorphism whose image $K:=\Phi(\delta\cL)$
is a compact subgroup of the Ellis group.  Finally, we show that the
elements $h_{\varepsilon_1}\cdots h_{\varepsilon_k}$ associated with
admissible words all belong to $K$.

We map the ordered-support partial semigroup $\cL$ into the Ellis group.  Its
tail subsemigroup in $\beta\cL$ determines the compact subgroup used in the
proof of Theorem~\ref{thm:dynamical}.

We begin by recording the tail structure of $\cL$, which will be used to verify adequacy and to define $\delta\cL$.
Assume $d\geq2$.  For $N\in\N$ put
\[
 \cL_{>N}:=\{s\in\cL:\min\supp(s)>N\}.
\]
The product on $\cL$ is the ordered-support construction used for finite
objects in the theory of adequate partial semigroups.  
If
$F\subseteq\cL$ is finite and nonempty and $M_F:=\max_{s\in F}\max\supp(s)$,
then the common right multipliers of the elements of $F$ are
$\cL_{>M_F}$.  In this ordered-support setting, adequacy means that all these
tails are nonempty.

\begin{lemma}\label{lem:L-adequate}
Assume that $d\geq2$. For every $N\in \N$,  $\cL_{>N}$ is nonempty.  In particular, $\cL$ is an adequate partial semigroup.
\end{lemma}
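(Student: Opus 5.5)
Since $\cL_{>N}$ consists of realizations whose support lies strictly above $N$, the plan is to exhibit one word $w\in\cW$ and then realize it with all times larger than $N$. For the word I take the shortest nontrivial loop from $e_0$ back to $e_0$ that avoids $0_d$. Since $d\geq2$, the coordinate $1\in\Z/d\Z$ is distinct from the coordinate $0$ that carries the $1$ in $e_0$. Put
\[
 w=(+1,-1).
\]
Starting from $e_0=(1,0,\ldots,0)$, we have $x_0=e_0\in D_{+1}$ because $(e_0)_1=0$. Then $x_1=\tau_{+1}(e_0)$ has coordinates $0$ and $1$ equal to $1$ and all others equal to $0$, so $x_1\in D_{-1}$. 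Next $x_2=\tau_{-1}(x_1)=e_0$. Hence $w$ is admissible from $e_0$ and ends at $e_0$. None of the states $x_0,x_1,x_2$ equals $0_d$, because coordinate $0$ stays equal to $1$ throughout. Therefore $w\in\cW$. This is exactly where $d\geq2$ is used: for $d=1$ every event toggles the only coordinate, and so every path from $e_0$ either leaves $e_0$ by passing through $0_d$ or is empty.

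For the realization, choose integers $t_1<t_2$ with $t_1>N$ and $t_1\equiv t_2\equiv 1\pmod d$; for instance $t_1=1+d(N+1)$ and $t_2=t_1+d$. Then $t_1\geq1$, and the congruence conditions $t_j\equiv r_j\pmod d$ hold with $r_1=r_2=1$. Thus
\[
 s=((t_1,+1),(t_2,-1))
\]
is a realization of $w$, so $s\in\cL$, and $\min\supp(s)=t_1>N$. This gives $s\in\cL_{>N}$, so $\cL_{>N}\neq\varnothing$.

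For adequacy, let $F\subseteq\cL$ be finite and nonempty and set $M_F=\max_{s\in F}\max\supp(s)$. Every $u\in\cL_{>M_F}$ satisfies $\max\supp(s)<\min\supp(u)$ for all $s\in F$, so $su$ is defined for every $s\in F$. Hence $\cL_{>M_F}\subseteq\sigma_\cL(F)$, and the first part shows this set is nonempty. For completeness I would also check that $\cL$ is a partial semigroup. If $s,u\in\cL$ with $su$ defined, then the concatenated word lies in $\cW$: both pieces are loops at $e_0$ avoiding $0_d$, so their concatenation is again such a loop. The times of $su$ are strictly increasing and satisfy the congruence conditions, so $su\in\cL$. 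Associativity is immediate, since concatenation is associative and the definedness conditions match. The only genuine point in the whole argument is choosing the word $(+1,-1)$, which relies on $d\geq2$; everything after that is routine.
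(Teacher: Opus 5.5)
Your proof is correct and follows the paper's argument. The paper also uses the two-event loop $(+r,-r)$, with an arbitrary $r\neq 0$ where you fix $r=1$, realizes it at times $t>N$ and $t+d$, and gets adequacy from $\cL_{>M_F}\subseteq\sigma_\cL(F)$. Your additional check that $\cL$ is closed under the partial product and is associative is a point the paper leaves implicit.
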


\begin{proof}
Fix $N\in\N$.  Choose $r\in\Z/d\Z$ with $r\neq0$.  Then
\[
 e_0 \xrightarrow{+r} \tau_{+r}(e_0) \xrightarrow{-r}e_0,
\]
and $(\tau_{+r}(e_0))_0=1$. Hence $(+r,-r)\in\cW$.
Choose $t>N$ with $t\equiv r\pmod d$ and set
\[
 s=((t,+r),(t+d,-r)).
\]
Then $s\in\cL$ and $\min\supp(s)=t>N$,
so $s\in\cL_{>N}$.
Thus $\cL_{>N}\neq\varnothing$.

For a finite nonempty $F\subseteq\cL$, let
\[
 M_F=\max_{s\in F}\max\supp(s).
\]
Since $\mathcal L_{>M_F}\neq\varnothing$, there exists
$u\in\mathcal L$ such that $\min\supp(u)>M_F$.
Hence
\[
\max\supp(s)<\min\supp(u),\  s\in F,
\]
and consequently
\[
u\in\bigcap_{s\in F}\varphi_{\mathcal L}(s)\neq\varnothing.
\]
Thus $\mathcal L$ is adequate.
 
\end{proof}

With adequacy established, we now pass to the Stone--\v{C}ech
compactification of $\cL$ and isolate the ultrafilters that concentrate on arbitrarily far tails. We regard $\cL$ as a discrete space and let $\beta\cL$ denote its Stone--\v{C}ech compactification.
Let
\[
 \delta\cL:=\bigcap_{N\in\N}\overline{\cL_{>N}}^{\,\beta\cL}.
\]
An ultrafilter $q$ belongs to $\delta\cL$ if and only if
$\cL_{>N}\in q$ for every $N\in\N$.
The tails $\cL_{>N}$ form a decreasing family, and the preceding
adequacy argument shows that they provide arbitrarily far common right
multipliers for finite subsets of $\cL$.  Thus, by the discussion in
Section 2, $\delta\cL$ is a nonempty compact Hausdorff right-topological
semigroup.  Moreover, for $p\in\beta\cL$ and $q\in\delta\cL$,
\[
 pq=\ulim{p}{s\in\cL}\,
     \ulim{q}{t\in\cL_{>\max\supp(s)}}st.
\]

 We next construct the map from $\cL$ into the Ellis group that will produce the desired compact subgroup.

Let $s=((t_1,\varepsilon_1),\ldots,(t_k,\varepsilon_k))\in\cL$, where
$\varepsilon_j=\pm r_j$ with $r_j\in\mathbb Z/d\mathbb Z$.  Define
\[
 \Phi(s):=\prod_{j=1}^k v_{t_j}(\varepsilon_j)
=\prod_{j=1}^k\begin{cases}Q_{t_j}^{-1},&\varepsilon_j=+r_j,\\
Q_{t_j},&\varepsilon_j=-r_j, 
  \end{cases}.
\]
For every $1\leq j\leq k$, $v_{t_j}(\varepsilon_j)\in\Lambda(E)$, so $\Phi(s)\in\Lambda(E)$.  Whenever $su$ is defined,
$\Phi(su)=\Phi(s)\Phi(u)$. Since $\cL$ is discrete and $E$ is compact Hausdorff, $\Phi:\cL\to E$ extends uniquely to a continuous map $\Phi:\beta\cL\to E$.
 
The key point is that the tail semigroup is compatible with the
multiplicative structure of $\Phi$.
\begin{proposition}\label{prop:Phi-delta}
The restriction $\Phi:\delta\cL\longrightarrow E$
is a semigroup homomorphism.  Its image $K:=\Phi(\delta\cL) $
is a compact subgroup of $E$.
\end{proposition}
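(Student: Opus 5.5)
The plan has two parts: show that $\Phi$ is multiplicative on $\delta\cL$, and then deduce from Lemma~\ref{lem:compact-cancellative} that $K$ is a compact subgroup.

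\textbf{Homomorphism property.} Fix $p,q\in\delta\cL$. We use the formula
\[
pq=\ulim{p}{s\in\cL}\,\ulim{q}{t\in\cL_{>\max\supp(s)}}st,
\]
together with the continuity of $\Phi$ on $\beta\cL$. This gives
\[
\Phi(pq)=\ulim{p}{s}\,\ulim{q}{t}\Phi(st).
\]
Strictly, one should verify this by noting that $pq=\ulim{p}{s}\,\ulim{q}{t}\, st$ holds as a limit in $\beta\cL$, where $st$ is viewed as a principal ultrafilter. The limit makes sense because $\cL_{>\max\supp(s)}\in q$. Applying the continuous map $\Phi$ twice, via Theorem~\ref{thm:ultralimit-continuous}, gives the displayed identity.

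For $t$ in that tail we have $\Phi(st)=\Phi(s)\Phi(t)$. Since $\Phi(s)\in\Lambda(E)$, left multiplication by $\Phi(s)$ is continuous, so
\[
\ulim{q}{t}\Phi(s)\Phi(t)=\Phi(s)\Phi(q).
\]
Right multiplication by $\Phi(q)$ is always continuous in $E$, so
\[
\ulim{p}{s}\Phi(s)\Phi(q)=\Phi(p)\Phi(q).
\]
Hence $\Phi(pq)=\Phi(p)\Phi(q)$. The only delicate point is the order of the two limits: the inner limit needs left continuity, which holds because the $\Phi(s)$ lie in the topological centre, and the outer limit needs only right continuity. This is exactly the structure the formula provides.

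\textbf{Compact subgroup.} Since $\delta\cL$ is compact and $\Phi$ is continuous, $K$ is compact, and it is closed because $E$ is Hausdorff. By the first step $K$ is a subsemigroup of the group $E$. Right translations in $K$ are continuous as restrictions of those of $E$, so $K$ is a compact Hausdorff right-topological semigroup sitting inside the group $E$, with the operation inherited from $E$. Lemma~\ref{lem:compact-cancellative} then shows that $K$ is a subgroup of $E$. Nonemptiness of $K$ comes from $\delta\cL\neq\varnothing$, which holds by Lemma~\ref{lem:L-adequate}.

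The main obstacle is the justification of the iterated-limit identity for $\Phi(pq)$. I would handle it through the tensor product and pushforward lemmas. Let
\[
D=\{(s,t):\max\supp(s)<\min\supp(t)\}\subseteq\cL\times\cL,
\]
which belongs to $p\otimes q$, and let $m:D\to\cL$ be the multiplication map. Then $pq=m_*(p\otimes q)$. Lemma~\ref{pushforward}, applied to the function $\Phi$, gives
\[
\Phi(pq)=(p\otimes q)\text{-}\lim_{(s,t)}\Phi(st),
\]
and the tensor-limit lemma turns this into the iterated limit used above.
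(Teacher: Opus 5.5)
Your proposal is correct and follows essentially the same route as the paper. You write $\Phi(pq)$ as the iterated limit $\ulim{p}{s}\ulim{q}{t}\Phi(st)$, factor $\Phi(st)=\Phi(s)\Phi(t)$, and use $\Phi(s)\in\Lambda(E)$ for the inner $q$-limit and right continuity for the outer $p$-limit. Then Lemma~\ref{lem:compact-cancellative} makes the compact subsemigroup $K$ a subgroup. Your derivation of the iterated-limit formula via $pq=m_*(p\otimes q)$ and the pushforward and tensor-limit lemmas only makes explicit a step the paper leaves implicit. The same holds for your remark that $K\neq\varnothing$ because $\delta\cL\neq\varnothing$, which that lemma needs in order to choose a minimal left ideal.
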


\begin{proof}
Let $p,q\in\delta\cL$.  For every $s\in\cL$, $\cL_{>\max\supp(s)}\in q$ and $\Phi(s)\in\Lambda(E)$.
Since $\Phi(s)\in\Lambda(E)$, left multiplication by $\Phi(s)$ is
continuous.  Since $E$ is right topological, right multiplication by $\Phi(q)$ is continuous as well.
 Hence
\[
\begin{aligned}
 \Phi(pq)
 &=\ulim{p}{s\in\cL}\ulim{q}{t\in\cL_{>\max\supp(s)}}\Phi(st)\\
 &=\ulim{p}{s\in\cL}\ulim{q}{t\in\cL}\Phi(s)\Phi(t)\\
 &=\ulim{p}{s\in\cL}\Phi(s)\Phi(q)\\
 &=\Phi(p)\Phi(q).
\end{aligned}
\]
Thus $\Phi|_{\delta\cL}$ is a semigroup homomorphism.

Since $\delta\cL$ is compact, so is $ K=\Phi(\delta\cL)$.
Moreover, $\Phi(p)\Phi(q)=\Phi(pq)\in K$ for all $p,q\in \delta\cL$, so $K$ is a subsemigroup of $E$.  
Since $E$ is right topological, so is $K$.
Hence $K$ is a compact Hausdorff right topological subsemigroup of the group $E$.
By Lemma~\ref{lem:compact-cancellative}, $K$ is a subgroup of $E$.

\end{proof}

We now show that $K$ contains the ultrafilter limits associated with admissible words.  For this purpose, we take independent free
ultrafilters along the congruence classes of the time parameters.

For $r\in\Z/d\Z$, put $D_r=\{t\geq1:t\equiv r\pmod d\}$
and fix a free ultrafilter $p_r\in\beta D_r\setminus D_r$.
Define
\[
L_r=\ulim{  p_r}{t\in D_r}Q_t^{-1},
 \ R_r=\ulim{  p_r}{t\in D_r}Q_t.    
\]
For $\varepsilon=\pm r$, set $h_{+r}=L_r,\ h_{-r}=R_r.$

\begin{lemma}\label{lem:word-limit}
Let $w=(\varepsilon_1,\ldots,\varepsilon_k)\in\cW$,  
then $h_{\varepsilon_1}\cdots h_{\varepsilon_k}\in K$.
\end{lemma}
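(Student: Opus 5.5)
We will realize $h_{\varepsilon_1}\cdots h_{\varepsilon_k}$ as $\Phi(q)$ for a suitable ultrafilter $q\in\delta\cL$, and then use $K=\Phi(\delta\cL)$ from Proposition~\ref{prop:Phi-delta}.

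\textbf{Construction of $q$.} Write $\varepsilon_j=\pm r_j$, and consider the tensor product
\[
 \mathbf p:=p_{r_1}\otimes p_{r_2}\otimes\cdots\otimes p_{r_k}\in\beta\Bigl(\prod_{j=1}^k D_{r_j}\Bigr).
\]
Let $Z\subseteq\prod_j D_{r_j}$ be the set of tuples with $t_1<t_2<\cdots<t_k$. Define $\phi:Z\to\cL$ by
\[
 \phi(t_1,\ldots,t_k)=((t_1,\varepsilon_1),\ldots,(t_k,\varepsilon_k)).
\]
Since $w\in\cW$ and $t_j\equiv r_j\pmod d$, this is a realization of $w$, so it lies in $\cL$.

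\textbf{Step 1: $Z\in\mathbf p$.} By the recursive definition of the tensor product and induction on $k$, it suffices that for fixed $t_1<\cdots<t_{k-1}$ the set $\{t\in D_{r_k}:t>t_{k-1}\}$ belongs to $p_{r_k}$. This holds because $p_{r_k}$ is free and cofinite sets belong to free ultrafilters. The same argument gives, for each $N$, that $\{(t_j)\in Z:t_1>N\}\in\mathbf p$. Hence we may regard $\mathbf p$ as an ultrafilter on $Z$ and set $q:=\phi_*\mathbf p$. Then $\phi^{-1}(\cL_{>N})\supseteq\{t_1>N\}\in\mathbf p$, so $\cL_{>N}\in q$ for all $N$, i.e.\ $q\in\delta\cL$.

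\textbf{Step 2: compute $\Phi(q)$.} By Lemma~\ref{pushforward} and the iterated-limit lemma for tensor products,
\[
 \Phi(q)=\ulim{p_{r_1}}{t_1}\,\ulim{p_{r_2}}{t_2}\cdots\ulim{p_{r_k}}{t_k}\ v_{t_1}(\varepsilon_1)v_{t_2}(\varepsilon_2)\cdots v_{t_k}(\varepsilon_k).
\]
Here the innermost variables may be restricted to $t_j>t_{j-1}$ because those sets lie in the ultrafilters. The limits are evaluated from the inside out. In the innermost limit, the prefix $v_{t_1}(\varepsilon_1)\cdots v_{t_{k-1}}(\varepsilon_{k-1})$ is a power of $T$, hence lies in $\Lambda(E)$, so left multiplication by it is continuous. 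The limit therefore becomes $v_{t_1}\cdots v_{t_{k-1}}\,h_{\varepsilon_k}$. At the next stage we need
\[
 \ulim{p}{t}\bigl(g\,v_t\,h\bigr)=g\,\bigl(\ulim{p}{t}v_t\bigr)\,h .
\]
Right multiplication by $h$ is continuous because $E$ is right topological, and left multiplication by $g\in\Lambda(E)$ is continuous. So each limit passes through: first pull out the fixed right factor, which is a product of $h$'s, then the left factor, which is a power of $T$. Iterating yields $\Phi(q)=h_{\varepsilon_1}\cdots h_{\varepsilon_k}\in K$.

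\textbf{Expected obstacle.} The main delicate point is the order of continuity in Step 2. The right factors are arbitrary Ellis elements, not in $\Lambda(E)$, so the argument only works because the limits are nested with the last index innermost. The tensor-product convention $(p_1\otimes\cdots\otimes p_{k-1})\otimes p_k$ gives the outer limit over $t_1$ and the inner limit over $t_k$, which must be checked carefully against the lemma on iterated limits. A secondary issue is the standing assumption $d\geq2$ under which $\cL$ and $K$ were set up; the case $d=1$ is presumably treated separately and will be ignored here.
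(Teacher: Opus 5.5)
Your proposal is correct and follows essentially the same route as the paper: restrict $p_{r_1}\otimes\cdots\otimes p_{r_k}$ to increasing tuples, push it forward along the realization map to obtain an element of $\delta\cL$, and evaluate the iterated limits from the inside out. That evaluation uses two facts: left multiplication by the prefix (a power of $T$, hence in $\Lambda(E)$) is continuous, and right multiplication by the already-computed product of $h$'s is continuous. On the $d=1$ case you set aside: $\cW=\varnothing$ there, since from $e_0$ the only available move is $-0$, which lands at $0_d$; so the lemma holds vacuously.
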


\begin{proof}
 Write $\varepsilon_j=\pm r_j,\  r_j\in\mathbb Z/d\mathbb Z,
\ 1\leq j\leq k.$
For each $1 \le j\le k$, let $  p_{r_j} \in \beta D_{r_j} \setminus D_{r_j} $, where $D_{r_j} =\{t\geq1:t\equiv r_j\pmod d\}.$ 
Set $ \pi = p_{r_1}\otimes\cdots\otimes  p_{r_k}$ and let
\[
D=\left\{(t_1,\ldots,t_k)\in D_{r_1}\times\cdots\times D_{r_k}:t_1<\cdots<t_k\right\}.
\]
 For each $2\leq j\leq k$ and fixed $t_1,\ldots,t_{j-1}$, the set
\[
\{t_j\in D_{r_j}:t_j>\max\{t_1,\ldots,t_{j-1}\}\}
\]
is cofinite in $D_{r_j}$ and hence belongs to $p_{r_j}$.  It follows,
by the definition of the tensor product, that $D\in p$.
Hence $ \pi|_D:=\{A\cap D:A\in  \pi\}$
is an ultrafilter on $D$, so $ \pi|_D\in\beta D$.

For $(t_1,\ldots,t_k)\in D$, put $s(t_1,\ldots,t_k)=((t_1,\varepsilon_1),\ldots(t_k,\varepsilon_k))$.
Since $w\in\mathcal W$, this is a realization of $w$, and hence $s(t_1,\ldots,t_k)\in\mathcal L$. 
Thus $s:D\longrightarrow\mathcal L$ is well-defined.
Let $ \mathfrak p=s_*(  \pi|_D)$
be the pushforward ultrafilter on $\cL$.
 
 We next verify that $\mathfrak p\in \delta\cL$.
Fix $N\in\N$.  Since $ p_{r_1}$ is free, $D\cap\{(t_1,\ldots,t_k):t_1>N\}\in p$.
Also,
\[
 D\cap \{(t_1,\ldots,t_k):t_1>N\} \subseteq s^{-1}(\cL_{>N}).
\]
Hence $\cL_{>N}\in  \mathfrak p$.
Since this holds for every $N$, $ \mathfrak p \in\delta\cL$.

By Theorem \ref{pushforward}, one has that
\[
\begin{aligned}
 \Phi( \mathfrak p) &= \ulim{  \mathfrak p }{ u\in\mathcal L} \Phi(u)=\ulim{ \pi}{ (t_1,\ldots,t_k)\in D}\Phi(s(t_1, \ldots, t_k))\\
 &= \ulim{ \pi }{ (t_1,\ldots,t_k)\in D}\prod_{j=1}^k
v_{t_j}(\varepsilon_j)\\
 &=\ulim{ p_{r_1}}{t_1\in D_{r_1}}\cdots
   \ulim{  p_{r_k}}{t_k\in D_{r_k}}
   \prod_{j=1}^k v_{t_j}(\varepsilon_j).
\end{aligned}
\]
 
Set $H_{k+1}=e$.  For $1\leq j\leq k$, define $H_j=h_{\varepsilon_j}H_{j+1}$.
For fixed $t_1,\ldots,t_{j-1}$, put 
\[
 A_{j-1}=\prod_{i=1}^{j-1}v_{t_i}(\varepsilon_i), \ A_0:=e.   
\]
Then $A_{j-1}\in\Lambda(E)$.  Right multiplication by $H_{j+1}$ is continuous.  Therefore
\[
\begin{aligned}
 &\ulim{ p_{r_j}}{t_j\in D_{r_j}}
 A_{j-1}v_{t_j}(\varepsilon_j)H_{j+1}\\
  =&A_{j-1}\left(\ulim{ p_{r_j}}{t_j\in D_{r_j}}v_{t_j}(\varepsilon_j)\right)H_{j+1}\\
  =&A_{j-1}h_{\varepsilon_j}H_{j+1} = A_{j-1}H_j.
\end{aligned}
\] 
Applying this identity successively for $j=k,k-1,\ldots,1$ gives 
\[
 \Phi( \mathfrak p)=H_1=h_{\varepsilon_1}\cdots h_{\varepsilon_k}. 
\]
Since $ \mathfrak p\in\delta\mathcal L$, it follows that
$h_{\varepsilon_1}\cdots h_{\varepsilon_k}=\Phi( \mathfrak p)\in K$.
 
\end{proof}

\section{Nilpotent cancellation and recurrence}\label{sec:proof}

 The purpose of this section is to use the nilpotent structure of the Ellis group to produce the identity as a limit of the transformations associated with admissible finite patterns.  
  We first establish the following consequence
of the filtration condition.  We then apply this result to the compact subgroup $K$ constructed in Section~\ref{sec:compact-subgroup}, show that the identity belongs to the closure of the relevant return transformations,
and deduce the desired recurrence statement.  Finally, we apply
Theorem~\ref{thm:dynamical} to nilsystems to obtain the desired nil-Bohr conclusion.

 We begin with the following consequence of the filtration condition.

\begin{lemma}\label{lem:cap}
Let $G$ be a group, and let $G_1,\ldots,G_{d+1}$ be subgroups satisfying $[G_j,G_1]\subseteq G_{j+1}$ for every $1\leq j\leq d$.  Assume that these subgroups form the filtration
\[
 G=G_1\supseteq G_2\supseteq\cdots\supseteq G_{d+1}=\{e\}.
\]
Let $C\in G_2$ and $\ell_1,\ldots,\ell_{d-1}\in G_1$.  For
$U=\{i_1<\cdots<i_k\}\subseteq\{1,\ldots,d-1\}$ set
\[
 B_U=\ell_{i_k}\cdots\ell_{i_1},
 \ z_U=B_UCB_U^{-1},
\]
and put $z_\varnothing=C$.  Then
\[
  C\in \big\langle z_U:\varnothing\neq U\subseteq\{1,\ldots,d-1\}\big\rangle.  
\]
 
\end{lemma}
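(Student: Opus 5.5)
We argue by downward induction along the filtration, showing that $C$ agrees with a suitable element of $H:=\langle z_U:\varnothing\neq U\subseteq\{1,\ldots,d-1\}\rangle$ modulo deeper and deeper filtration subgroups. The basic observation is that conjugating an element of $G_j$ by $\ell\in G_1$ changes it only by a commutator in $G_{j+1}$:
\[
 \ell y\ell^{-1}=[\ell,y]\,y,\qquad [\ell,y]\in[G_1,G_j]=[G_j,G_1]\subseteq G_{j+1}
\]
for $y\in G_j$. (Here we use $[a,b]^{-1}=[b,a]$, so $[G_1,G_j]=[G_j,G_1]$.) Also, each $G_j$ is normal, since $[G_j,G]\subseteq G_{j+1}\subseteq G_j$.

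\textbf{Step 1: an inclusion--exclusion identity.} For $U\subseteq\{1,\ldots,d-1\}$ the element $B_U$ is an ordered product of the $\ell_i$ with $i\in U$. We form the alternating product
\[
 W:=\prod_{U\subseteq\{1,\ldots,d-1\}} z_U^{(-1)^{|U|}},
\]
taken in some fixed order. We then claim that $W\in G_{d+1}=\{e\}$. Granting this, the factor $z_\varnothing=C$ can be solved for in terms of the other factors, so $C\in H$. To prove the claim, we view it as a $(d-1)$-fold ``discrete derivative'' of the map $(\ell_1,\ldots,\ell_{d-1})\mapsto B C B^{-1}$. Each derivative in the direction $\ell_i$ raises the filtration degree by one, starting from $C\in G_2$. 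After $d-1$ derivatives we land in $G_{d+1}$.

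\textbf{Step 2: organising this as an induction on the number of conjugators.} For $m\le d-1$ and $y\in G_j$, define
\[
 \Delta_{m}(y):=\prod_{U\subseteq\{1,\ldots,m\}}(B_UyB_U^{-1})^{(-1)^{|U|}},
\]
with a recursive ordering: $\Delta_m(y)=\Delta_{m-1}(y)\,\bigl(\ell_m\Delta_{m-1}(y)\ell_m^{-1}\bigr)^{-1}$, possibly up to an overall sign convention. The ordering of $B_U=\ell_{i_k}\cdots\ell_{i_1}$ (largest index outermost) is exactly what makes conjugation by $\ell_m$ act on the whole level-$(m-1)$ product. So $\Delta_m(y)=[\Delta_{m-1}(y),\ell_m]$ up to inversion, and the two groupings agree. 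Inductively, $\Delta_{m-1}(y)\in G_{j+m-1}$ implies $\Delta_m(y)\in G_{j+m}$. With $y=C$, $j=2$ and $m=d-1$, we get $\Delta_{d-1}(C)\in G_{d+1}=\{e\}$. Expanding $\Delta_{d-1}(C)$ as a word in the $z_U^{\pm1}$, it contains $z_\varnothing=C$ exactly once, at the start. Hence $C$ equals a product of the $z_U^{\pm1}$ with $U\neq\varnothing$.

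\textbf{Main obstacle.} The delicate step is checking that the recursively defined word really is a product of the $z_U$. One must verify that $\ell_m\,z_U\,\ell_m^{-1}=z_{U\cup\{m\}}$ whenever $U\subseteq\{1,\ldots,m-1\}$. This holds because $\ell_m B_U=B_{U\cup\{m\}}$ under the convention that $B_U$ has its largest index on the left. Given that, conjugation by $\ell_m$ maps the level-$(m-1)$ word to the word in the $z_{U\cup\{m\}}$, and no stray non-$z_U$ factors appear. The remaining degree bookkeeping is routine, using the normality of the $G_j$ noted at the start.
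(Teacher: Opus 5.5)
Your proof is correct and follows essentially the paper's argument. The paper also forms iterated commutators $W_0=C$, $W_m=[\ell_m,W_{m-1}]$, and shows $W_{d-1}\in G_{d+1}=\{e\}$. It then expands $W_{d-1}$ formally via $\ell_m z_U\ell_m^{-1}=z_{U\cup\{m\}}$, so that each $z_U$ occurs exactly once, and isolates $z_\varnothing=C$.

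Your $\Delta_m=[\Delta_{m-1},\ell_m]$ simply orders each commutator the other way, which conveniently makes $C$ the first letter of the word. One caveat: the ``some fixed order'' in your Step~1 has to be this recursive order. For an arbitrary ordering of the factors $z_U^{\pm1}$, the alternating product need not lie in $G_{d+1}$.
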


\begin{proof}
Set $W_0=C$ and, for $1\leq m\leq d-1$, define
\[
 W_m=[\ell_m,W_{m-1}]=\ell_mW_{m-1}\ell_m^{-1}W_{m-1}^{-1}.   
\]
For each $1\leq m\leq d-1$, if $W_{m-1}\in G_{m+1}$, then 
\[
 W_m =[\ell_m,W_{m-1}]=[W_{m-1},\ell_m]^{-1} \in G_{m+2}.
\]
Since $W_0=C\in G_2$, induction shows that for every $m$ with
$0\leq m\leq d-1$, $ W_m\in G_{m+2}$.
In particular, $W_{d-1}=e$.


We use the convention $\{1,\ldots,0\}:=\varnothing$.
For every $0\le m\le d-1$, we consider the recursive formal expansion of $W_m$ obtained from
\[
W_m=\ell_mW_{m-1}\ell_m^{-1}W_{m-1}^{-1}.
\]
For $U\subseteq\{1,\ldots,m\}$, let $\epsilon_m(U)$ denote the exponent of the corresponding formal factor $z_U$ in this expansion. We verify inductively that every $z_U$ occurs exactly once and $\epsilon_m(U)=(-1)^{m-|U|}$.
For $m=0$, $W_0=z_\varnothing, \epsilon_0(\varnothing)=1$.
Assume the assertion for $m-1$. We show the case of $m$.
For every $U\subseteq\{1,\ldots,m-1\}$,  For every $U\subseteq\{1,\ldots,m-1\}$,
\[
\ell_mz_U\ell_m^{-1}=\ell_mB_UCB_U^{-1}\ell_m^{-1}
=B_{U\cup\{m\}}CB_{U\cup\{m\}}^{-1}=z_{U\cup\{m\}}.
\]
Thus  $\ell_mW_{m-1}\ell_m^{-1}$ contains exactly once each $z_V^{\epsilon_m(V)}$ with $m\in V$,
while $W_{m-1}^{-1}$ contains exactly once each $z_V^{\epsilon_m(V)} $ with $m\notin V$.
Where  
\[
 \epsilon_m(V)=\begin{cases}
  \epsilon_{m-1}(V\setminus\{m\}),&m\in V,\\
  -\epsilon_{m-1}(V),&m\notin V.
 \end{cases}
\]
In both cases, $\epsilon_m(V)=(-1)^{m-|V|}$. 

In particular, for $m=d-1$, the unique formal factor associated with $U=\varnothing$ is
\[
z_{\varnothing}^{(-1)^{d-1}}=C^{(-1)^{d-1}},
\]
while every other formal factor belongs to
\[
H:=\big\langle z_U:\varnothing\neq U\subseteq\{1,\ldots,d-1\}
\big\rangle.
\]
Thus there exist $A,B\in H$ such that
\[
W_{d-1}=A\,C^{(-1)^{d-1}}B.
\]
Since $W_{d-1}=e$, we have $C^{(-1)^{d-1}}=A^{-1}B^{-1}\in H$,
and consequently $C\in H$. 
  
\end{proof}

We are now ready to prove our mian theorems.

\begin{proof}[Proof of Theorem~\ref{thm:dynamical}]
Assume first that $d\geq2$.  Fix $P=(p_i)_{i\geq1}$, and use the
notation from Sections~\ref{sec:cube} and~\ref{sec:compact-subgroup}.
Let $\pi:E\longrightarrow E/E_2$ be the quotient map.  Inversion is continuous
on the compact topological
group $E/E_2$.  Hence, for every $r\in\Z/d\Z$,
\[
\begin{aligned}
 \pi(L_r)
 &=\ulim{  p_r}{t\in D_r}\pi(Q_t^{-1})\\
 &=\ulim{ p_r}{t\in D_r}\pi(Q_t)^{-1}\\
 &=\left(\ulim{\ p_r}{t\in D_r}\pi(Q_t)\right)^{-1}\\
 &=\pi(R_r)^{-1}.
\end{aligned}
\]
Set $C:=R_0L_0$.
Then $\pi(C)=\pi(R_0)\pi(L_0)=e$,
so $C\in\ker\pi=E_2$. We next show that this element $C$ belongs to $K$.

Apply Lemma~\ref{lem:cap} with $G_j=E_j$ and $\ell_i=L_i$.  Let
$U=\{i_1<\cdots<i_k\}$ be a nonempty subset of $\{1,\ldots,d-1\}$, and set $B_U=L_{i_k}\cdots L_{i_1},\ H_U=R_{i_1}\cdots R_{i_k}$.

Consider
\[
 w_U=(+i_k,\ldots,+i_1,-i_1,\ldots,-i_k)
\]
and
\[
 \widetilde w_U=(+i_k,\ldots,+i_1,-0,+0,-i_1,\ldots,-i_k).
\]
For $A\subseteq\Z/d\Z$, let $\mathbf1_A\in E_d$ be its indicator vector, i.e., 
\[
 (\mathbf1_A)_r= \begin{cases}
                  1, r\in A\\
                  0, r\notin A, 
                  \end{cases}   r\in \Z/d\Z.
\]
The state path of $w_U$ is
\[
 \mathbf1_{\{0\}}
 \xrightarrow{+i_k}\cdots\xrightarrow{+i_1}
 \mathbf1_{\{0\}\cup U}
 \xrightarrow{-i_1}\cdots\xrightarrow{-i_k}
 \mathbf1_{\{0\}}.
\]
The state path of $\widetilde w_U$ is
\[
 \mathbf1_{\{0\}}
 \xrightarrow{+i_k}\cdots\xrightarrow{+i_1}
 \mathbf1_{\{0\}\cup U} \xrightarrow{-0}
 \mathbf1_U\xrightarrow{+0}
 \mathbf1_{\{0\}\cup U} 
 \xrightarrow{-i_1}\cdots\xrightarrow{-i_k}
 \mathbf1_{\{0\}}.
\]
Since $U\neq\varnothing$, we have $\mathbf1_U\neq0_d$.  Thus both
paths avoid $0_d$ except at their initial and terminal states, and hence $w_U,\widetilde w_U\in\cW$.
Lemma~\ref{lem:word-limit} implies  
\[
 B_UH_U\in K, \ B_UCH_U\in K.
\]
Since $K$ is a group,
\[
 (B_UCH_U)(B_UH_U)^{-1}=B_UCB_U^{-1}\in K.
\]
This holds for every nonempty $U\subseteq\{1,\ldots,d-1\}$.  
Lemma~\ref{lem:cap} therefore gives $C\in K$.  Since $K$ is a group, we also have $C^{-1}\in K$.

Choose $q\in\delta\cL$ with $\Phi(q)=C^{-1}$.
For every $N\in\N$, $\cL_{>N}\in q$.
Let $a,b\in D_0$ and $s\in\cL$ satisfy
\[
 a<\min\supp(s)\leq\max\supp(s)<b.
\]
Write  
\[
 s=((t_1,\varepsilon_1),\ldots,(t_k,\varepsilon_k))
\]
and set
\[
 \widetilde s
 =((a,+0),(t_1,\varepsilon_1),\ldots,(t_k,\varepsilon_k),(b,-0)).
\]
The state path of $\widetilde s$ is
\[
 0_d \xrightarrow{+0} e_0 \xrightarrow{s} e_0\xrightarrow{-0} 0_d.
\]
The middle path corresponding to $s$ is the realization of an admissible word, so it does not visit $0_d$.
By Lemma~\ref{lem:path-realisation}, there exists $ \alpha(a,s,b) \in \Sd$  such that 
\[
 \operatorname{Ev}( \alpha(a,s,b))=\widetilde s,
\]
and
\[
 \begin{aligned}
 T^{p_{ \alpha(a,s,b) }}
 &=v_a(+0)\Phi(s)v_b(-0)\\
 &=Q_a^{-1}\Phi(s)Q_b.
\end{aligned}   
\]

For fixed $a$ and $s$, the element $Q_a^{-1}\Phi(s)$ is a power of $T$ and  hence belongs to $\Lambda(E)$.  Since
\[
 \{b\in D_0:b>\max\supp(s)\}\in p_0,
\]
we have
\[
\begin{aligned}
 &\ulim{  p_0}
      {\substack{b\in D_0\\b>\max\supp(s)}}
 Q_a^{-1}\Phi(s)Q_b\\
 & =Q_a^{-1}\Phi(s)
   \left(\ulim{  p_0}{b\in D_0}Q_b\right)\\
 &=Q_a^{-1}\Phi(s)R_0.
\end{aligned}
\]
For fixed $a$,
\[
 \{s\in\cL:\min\supp(s)>a\}\in q.
\]
Left multiplication by $Q_a^{-1}$ and right multiplication by $R_0$ are continuous.  Hence
\[
\begin{aligned}
 &\ulim{q}{s:\,\min\supp(s)>a}
 Q_a^{-1}\Phi(s)R_0\\
 & =Q_a^{-1}
   \left(\ulim{q}{s\in\cL}\Phi(s)\right)R_0\\
 & =Q_a^{-1}C^{-1}R_0.
\end{aligned}
\]
Right multiplication by $C^{-1}R_0$ is continuous, so
\[
 \begin{aligned}
 &\ulim{  p_0}{a\in D_0}
  \ulim{q}{s:\,\min\supp(s)>a}
  \ulim{ p_0}
       {\substack{b\in D_0\\b>\max\supp(s)}}
 Q_a^{-1}\Phi(s)Q_b\\
  = & \left(\ulim{ p_0}{a\in D_0}Q_a^{-1}\right)C^{-1}R_0 
  = L_0C^{-1}R_0 \\
  =&L_0(R_0L_0)^{-1}R_0 =e.
\end{aligned}   
\]
 
Set $\mathcal R=\{T^{p_\alpha}:\alpha\in\Sd\}$, then $e\in\overline{\mathcal R}$.
For $x\in X$ and a neighbourhood $U$ of $x$, set
\[
 \mathcal O_U=\{g\in E:gx\in U\}.
\]
The evaluation map $g\mapsto gx$ is continuous and $e\in\mathcal O_U$, so
$\mathcal O_U$ is a neighbourhood of $e$.  Hence $\mathcal O_U\cap\mathcal R\neq\varnothing$.
Choose $\alpha\in\Sd$ such that
$T^{p_\alpha}\in\mathcal O_U$.  Then $T^{p_\alpha}x\in U$,
and hence $p_\alpha\in N(x,U)$.

It remains to treat the case $d=1$, where no nontrivial commutator
cancellation is needed.
Now let $d=1$.  Then $E_2=\{e\}$, so $E$ is a compact abelian topological
group.  Continuity of inversion implies
\[
 L_0=\ulim{  p_0}{a\in D_0}Q_a^{-1}
 =\left(\ulim{  p_0}{a\in D_0}Q_a\right)^{-1}=R_0^{-1}.
\]
For $a<b$ in $D_0=\N$, set
\[
 \alpha=[a,b-1]\cap\N.
\]
Then $\alpha\in\mathcal S_1$ and
\[
 T^{p_\alpha}=Q_a^{-1}Q_b.
\]
Since $\{b\in D_0:b>a\}\in p_0$,
\[
 \ulim{ p_0}{a\in D_0}
 \ulim{  p_0}{\substack{b\in D_0\\b>a}}Q_a^{-1}Q_b
  =L_0R_0 =e.
\]
Therefore
\[
 e\in
 \overline{\{T^{p_\alpha}:\alpha\in\mathcal S_1\}}.
\]
 The same evaluation argument as above now completes the proof.
\end{proof}

We finally apply the recurrence theorem to nilsystems.

\begin{proof}[Proof of Theorem~\ref{thm:nilbohr}]
Let $A$ be a $\NilBohr{d}$ set.  Choose a $d$-step nilsystem $G/\Gamma$, an
element $g\in G$, and a neighbourhood $U$ of $\Gamma$ such that
\[
 B:=\{n\in\Nzero:g^n\Gamma\in U\}\subseteq A.
\]
Set
\[
 Y=\overline{\{g^n\Gamma:n\in\Z\}}.
\]
Then $(Y,T_g)$ is a minimal $d$-step nilsystem.
By Lemma~\ref{pro-nilsystem-top-nilpotent group}, the Ellis group $E$ of $(Y,T_g)$ is $d$-step top-nilpotent.  Thus its topological lower central series
\[
E_1^{\mathrm{top}}=E,\ E_{j+1}^{\mathrm{top}}=\overline{[E_j^{\mathrm{top}},E]}
\]
satisfies $E_{d+1}^{\mathrm{top}}=\{e\}$.
By the definition of $E_j^{\mathrm{top}}$ and Lemma~\ref{E_d^top subgroup},
each $E_j^{\mathrm{top}}$ is a closed subgroup of $E$.  Hence
\[
E_1^{\mathrm{top}}\supseteq E_2^{\mathrm{top}}\supseteq\cdots
\supseteq E_{d+1}^{\mathrm{top}}=\{e\}
\]
is a filtration of the type required in Theorem~\ref{thm:dynamical}.
Therefor, $B\in\SG_d^*$.
Since $B\subseteq A$, $A\in\SG_d^*$.

\end{proof}

\noindent \textbf{Acknowledgment.}
J. Zhao is supported by NSF of China (Grant nos.~12301226 and 12371190).

\end{document}